\numberwithin{equation}{section}
\def\beq{\begin{eqnarray}}
\def\eeq{\end{eqnarray}}
\def\beqs{\begin{eqnarray*}}
\def\eeqs{\end{eqnarray*}}
\def\NN{{\mathbb N}}
\def\mz{{\mathbb Z}}
\def\mr{{\mathbb R}}
\def\ind{\hbox{\rm ind}}
\def\ord{\hbox{\rm ord}}
\newfont{\df}{eufm10}
\def\mod{{\hbox{\rm mod}}}
\title[Index of minimal zero-sum sequences]
{ On the index of length four minimal zero-sum sequences}
\thanks{$^\dag$The corresponding author's email: xialimeng@ujs.edu.cn}
\author[C.-X. Shen]{Caixia   Shen$^1$}
\author[L.-M. Xia]{Li-meng   Xia$^{\dag,1}$}
\author[Y.-L. Li]{Yuanlin Li$^2$}
\date{}
\begin{document}
\maketitle
\centerline{$^1$Faculty of Science, Jiangsu University, Zhenjiang, 212013, Jiangsu Pro., China}
\centerline{$^2$Department of Mathematics, Brock University, St. Catharines, ON, Canada L2S 3A1}

\def\abstractname{ABSTRACT}
\begin{abstract}
Let $G$ be a finite cyclic group. Every sequence $S$ over $G$ can be written in the form $S=(n_1g)\cdot\ldots\cdot(n_lg)$ where $g\in G$ and $n_1, \ldots, n_l\in[1, \ord(g)]$, and the index $\ind(S)$ of $S$ is defined to be the minimum of $(n_1+\cdots+n_l)/\ord(g)$ over all possible $g\in G$ such that $\langle g \rangle =G$.
 A conjecture on the index of length four sequences says that every minimal zero-sum sequence of length 4 over a finite cyclic group $G$ with $\gcd(|G|, 6)=1$ has index 1. The conjecture was confirmed recently for the case when $|G|$ is a product of at most two prime powers. However, the general case is still open. In this paper, we make some progress towards solving the general case.  Based on earlier work on this problem, we show that  if $G=\langle g\rangle$ is a  finite cyclic group of order $|G|=n$ such that $\gcd(n,6)=1$  and $S=(x_1g)(x_2g)(x_3g)(x_4g)$ is  a minimal zero-sum  sequence over $G$ such that $x_1,\cdots,x_4\in[1,n-1]$ with $\gcd(n,x_1,x_2,x_3,x_4)=1$, and $\gcd(n,x_i)>1$ for some $i\in[1,4]$, then  $\ind(S)=1$.
 By using an innovative method developed in this paper, we are able to give a new (and much shorter) proof to the index conjecture for the case when $|G|$ is a product of two prime powers.

\vskip3mm \noindent {\it Key Words}: minimal zero-sum sequence, index of sequences.

\vskip3mm \noindent {\it 2010 Mathematics Subject Classification:} 11B50, 20K01
\end{abstract}

\newtheorem{theo}{Theorem}[section]
\newtheorem{theorem}[theo]{Theorem}
\newtheorem{defi}[theo]{Definition}
\newtheorem{conj}[theo]{Conjecture}
\newtheorem{lemma}[theo]{Lemma}
\newtheorem{coro}[theo]{Corollary}
\newtheorem{proposition}[theo]{Proposition}
\newtheorem{remark}[theo]{Remark}

\setcounter{section}{0}

\section{Introduction}
Throughout the paper, let $G$ be an additively written finite cyclic group of order $|G| = n$. By
a sequence over $G$ we mean a finite sequence of terms from $G$ which is unordered and repetition
of terms is allowed. We view sequences over $G$ as elements of the free abelian monoid $\mathcal{F}(G)$
and use multiplicative notation. Thus a sequence $S$ of length $|S| = k$ is written in the form
$S = (n_1g)\cdot...\cdot(n_kg)$, where $n_1,\cdots,n_k\in{\mathbb N}$ and $g\in G$. We call $S$ a {\it zero-sum sequence} if $\sum^k_{j=1}n_jg = 0$. If $S$ is a zero-sum sequence, but no proper nontrivial subsequence of $S$ has sum zero, then $S$ is called a {\it minimal zero-sum sequence}. Recall that the index of a sequence $S$ over $G$
is defined as follows.

\begin{defi}
For a sequence over $G$
\beqs S=(n_1g)\cdot...\cdot(n_kg), &&\hbox{where}\;1\leq n_1,\cdots,n_k\leq n,\eeqs
 the index of $S$ is defined  by $\ind(S)=\min\{\|S\|_g|g\in G \hbox{~with~}\langle g\rangle=G\}$, where
\beq \|S\|_g=\frac{n_1+\cdots+n_k}{\ord(g)}.\eeq
\end{defi}
\noindent Clearly, $S$ has sum zero if and only if $\ind(S)$ is an integer. We note that there are also slightly different definitions of the index in the literature, but they are all equivalent (see \cite[Lemma 5.1.2]{Ger2}).

\begin{conj}
Let $G$ be a finite cyclic group such that $\gcd(|G|,6)=1$. Then every minimal zero-sum sequence $S$ over $G$ of length $|S|=4$ has $\ind(S)=1$.
\end{conj}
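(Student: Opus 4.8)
The plan is to rephrase $\ind(S)=1$ as the existence of a suitable unit and then to produce such a unit. Since $S$ is a minimal zero-sum sequence with every $x_i\in[1,n-1]$, the integer $x_1+x_2+x_3+x_4$ is a multiple of $n$ in the open interval $(0,4n)$, so it equals $n$, $2n$ or $3n$; and for a minimal zero-sum sequence, $\ind(S)=1$ holds precisely when there is a unit $a\in(\mathbb{Z}/n)^{\times}$ with $\langle ax_1\rangle+\langle ax_2\rangle+\langle ax_3\rangle+\langle ax_4\rangle=n$, where $\langle t\rangle\in[1,n-1]$ is the residue of $t$ modulo $n$ (well defined since each $ax_i\not\equiv0$). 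If the sum is $n$ we take $a=1$, and if it is $3n$ then passing from $S$ to $(-1)\cdot S$, i.e.\ replacing each $x_i$ by $n-x_i$, reduces to the sum-$n$ case; so I may assume $x_1+x_2+x_3+x_4=2n$. Now fix a prime $p$ dividing $\gcd(n,x_i)$ for some $i$; since $\gcd(n,6)=1$ we have $p\ge5$, and after reindexing I assume $p\mid x_4$. Reducing $x_1+x_2+x_3+x_4=2n$ modulo $p$ and using $\gcd(n,x_1,x_2,x_3,x_4)=1$ shows that $p$ divides exactly one or exactly two of the $x_i$; reindexing once more I arrange $p\nmid x_1$, and (no two-term subsequence of $S$ summing to $0$) I record that some two of the $x_i$ have sum $<n$ and the complementary two have sum $>n$.

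The new ingredient I would use is that divisibility by $p$ is compatible with the residue map: writing $x_4=p\,y_4$ with $y_4\in[1,\,n/p-1]$, every unit $a$ of $\mathbb{Z}/n$ is also a unit of $\mathbb{Z}/(n/p)$ and satisfies $\langle ax_4\rangle_n=p\,\langle ay_4\rangle_{n/p}$, since the factor $p$ pulls out of the reduction modulo $n=p\cdot(n/p)$; in addition $x_1+x_2+x_3=2n-x_4\equiv0\pmod p$. Combining these with the structural description of length-four index-one sequences from the earlier work on which this paper builds, and with the validity of the conjecture when $|G|$ is a product of at most two prime powers (reproved later in the paper), I would pass to the smaller cyclic group $\mathbb{Z}/(n/p)$: when only $x_4$ is divisible by $p$, $S$ induces there a length-four zero-sum sequence, and when two of the terms, say $x_3$ and $x_4$, are divisible by $p$, the two $p$-multiples get merged and $S$ induces a sequence of smaller total. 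In each case I would check that the induced sequence can be taken minimal, deduce that it has index one (invoking the two-prime-power result, and an induction on the number of distinct prime divisors of $n$, with the $\gcd$ hypothesis arranged so as to be inherited), and finally lift an index-one representation back to $\mathbb{Z}/n$ by prescribing the $p$-part and the prime-to-$p$ part of $a$ separately through the Chinese Remainder Theorem, the identity $\langle ax_4\rangle_n=p\,\langle ay_4\rangle_{n/p}$ keeping the fourth term under control throughout the lift.

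The hard part will be making the descent and lift honest. Reduction modulo the subgroup of order $p$ need not send a minimal zero-sum sequence to a minimal one, and not every length-four index-one sequence over $\mathbb{Z}/n$ comes from a length-four sequence over $\mathbb{Z}/(n/p)$, so the correspondence between $S$ and its image must be set up to respect the partial-sum (Savchev--Chen type) structure of index-one sequences in both directions. The genuinely awkward configurations are the mixed ones, in which different terms $x_i$ share different prime divisors with $n$ — for instance two terms divisible by $p$ only and two terms divisible only by primes other than $p$ — because then no single Chinese-remainder factor of $n$ governs all four terms simultaneously; there one has to iterate the reduction, performing the analogous step at a prime dividing $\gcd(n,x_1)$ or $\gcd(n,x_2)$, and then settle a bounded list of residue cases by hand. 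By contrast, the reduction to $x_1+x_2+x_3+x_4=2n$, the convenient reordering of the $x_i$, and the final Chinese-remainder lift are all routine once the corresponding statement over the smaller group is in place.
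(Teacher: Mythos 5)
The statement you are trying to prove is stated in the paper as a conjecture, and the paper itself does not prove it: it explicitly records that the general case is open and establishes only the special case where $\gcd(n,x_1,x_2,x_3,x_4)=1$ and $\gcd(n,x_i)>1$ for some $i$ (its Theorem 1.3), by an in-group multiplier technique (units of the form $y=1+t\frac{n}{p}$, the criterion of Lemma 2.1, and an averaging/counting argument over these units), not by a quotient-group descent. Your proposal is therefore not ``the same proof,'' and, more importantly, it is not a proof at all: the two steps you yourself flag as ``the hard part'' are exactly where the entire difficulty of the problem sits, and nothing in the sketch addresses them. Concretely: (i) reducing modulo $n/p$ need not produce a minimal zero-sum sequence (terms can even become $\equiv 0$), so the inductive hypothesis cannot be applied as stated; (ii) even granting an index-one multiplier $b$ for the induced sequence over $\mathbb{Z}/(n/p)$, lifting it is not a Chinese Remainder formality. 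A unit $a\equiv b \pmod{n/p}$ has $p$ possible lifts, and while your identity $\langle ax_4\rangle_n=p\langle ay_4\rangle_{n/p}$ pins down the term divisible by $p$, each of the other three residues $\langle ax_i\rangle_n$ is determined only up to an arbitrary multiple of $n/p$; forcing their sum to be exactly $n$ rather than $2n$ or $3n$ is precisely the original problem in disguise. The paper's choice of multipliers $1+t\frac{n}{p}$ and the counting of how many residues land below $\frac{n}{2}$ (Lemmas 2.2, 2.5, 2.8, 2.9) is its substitute for this missing control, and your outline contains no analogue of it.

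There is also a coverage gap independent of the descent: your argument begins by fixing a prime $p$ dividing some $\gcd(n,x_i)$, so it says nothing about sequences in which all four $x_i$ are coprime to $n$. That case is not a degenerate one you may ignore; it is the portion of the conjecture that the literature cited in the paper settles only when $n$ is a prime power or a product of two prime powers, and for general $n$ it remains open, so it cannot be absorbed by ``invoking the two-prime-power result'' inside an induction on the number of prime divisors (the induction would still need that case as its hardest branch at every level). Finally, the preliminary reductions you do carry out (sum equal to $2n$ after possibly replacing $S$ by $-S$; $p$ dividing exactly one or two of the $x_i$, which implicitly uses $\gcd(n,x_1,\dots,x_4)=1$ and should be justified as a harmless normalization) are fine but routine; the substance of a proof of the conjecture, or even of the paper's Theorem 1.3, is absent from the proposal.
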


If $S$ is a minimal zero-sum sequence of length $|S|$ such that $|S|\leq3$ or $|S|\geq\lfloor \frac{n}2\rfloor+2$, then $\ind(S)=1$ (see [12, 14]). In contrast to that, it was shown that for each $k$ with $5\leq k\leq \lfloor \frac{n}2\rfloor+1$, there is a minimal zero-sum subsequence $T$ of length $|T| = k$ with $\ind(T)\geq 2$ ([11, 13]) and that the same is true for $k = 4$ and $\gcd(n, 6)\not= 1$ ([11]). The only unsolved case leads to the above conjecture.

In [10], it was proved that Conjecture 1.2 holds true if $n$ is a prime power. Recently in [9], it was proved that Conjecture 1.2 holds for $n=p_1^\alpha\cdot p_2^\beta$ (a product of two prime powers) with the restriction that at least one $n_i$ is co-prime to $|G|$. In a most recent paper [18], the conjecture was confirmed for the remaining situation in the case when $n=p_1^\alpha\cdot p_2^\beta$. Thus these two papers together completely settle the case when $n$ is a product of two prime powers.

Let $S =(n_1g)\cdot...\cdot(n_kg)$ be a minimal zero-sum sequence over $G$. Then $S$ is called reduced if $(pn_1g)\cdot...\cdot (pn_kg)$ is no longer a minimal zero-sum sequence for every  prime factor $p$ of $n$. In [17] and [19], Conjecture 1.2 was proved if the sequence $S$ is reduced. However, the general case is still open. In the present paper, we make some progress towards solving the general case and obtain the following main result.

\begin{theo}\label{mainthm}
Let $G=\langle g\rangle$ be a  finite cyclic group of order $|G|=n$ such that $\gcd(n,6)=1$. Let $S=(x_1g)(x_2g)(x_3g)(x_4g)$ be a minimal zero-sum  sequence over $G$, where $g\in G$ with $\ord(g)=n$ and $x_1,\cdots,x_4\in[1,n-1]$ with $\gcd(n,x_1,x_2,x_3,x_4)=1$, and $\gcd(n,x_i)>1$ for some $i\in[1,4]$. Then  $\ind(S)=1$.
\end{theo}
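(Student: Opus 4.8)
The plan is to reduce to the already-known reduced case by a careful descent argument. Since $\gcd(n,x_i)>1$ for some $i$, say $i=4$, write $p\mid\gcd(n,x_4)$ for some prime $p$; because $\gcd(n,6)=1$ we have $p\geq5$. Consider the sequence $S'=(x_1g')(x_2g')(x_3g')(x_4g')$ obtained by multiplying all coefficients by a suitable unit, or more to the point, pass to the quotient group $G/pG$ or to a subgroup, and analyze when $S$ fails to be reduced, i.e. when $(px_1g)\cdots(px_4g)$ is still a minimal zero-sum sequence. If $S$ is reduced, we are done by [17, 19]. So assume $S$ is not reduced; then for some prime $p\mid n$ the sequence $p\cdot S:=(px_1g)\cdots(px_4g)$ is again a minimal zero-sum sequence over $\langle g\rangle$ (of the same length $4$). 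The key observation is that $\ind(S)$ and $\ind(p\cdot S)$ are linked: writing $x_i=y_i$ after choosing the optimal generator, one shows $\|S\|_g$ and $\|p\cdot S\|_g$ differ in a controlled way, so that $\ind(p\cdot S)=1$ forces $\ind(S)=1$ (this is where the hypothesis $\gcd(n,x_i)>1$ is used: it guarantees that after multiplying by $p$ one coefficient drops, lowering complexity).

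Concretely, I would set up an induction on $n$ (or on the number of prime factors of $n$ counted with multiplicity, or on $\max_i\gcd(n,x_i)$). The base case is $n$ a prime power, settled in [10]. For the inductive step, pick a prime $p\mid n$ witnessing non-reducedness. There are two sub-cases. First, if $\gcd(n,x_1,\dots,x_4)$ stays $1$ after replacing each $x_i$ by its reduction modulo $n/p$ inside the group of order $n/p$ — then $p\cdot S$, viewed appropriately, is a length-$4$ minimal zero-sum sequence over a smaller cyclic group (or over $G$ but with a coefficient that is now divisible by a smaller part of $n$), and the inductive hypothesis or the reduced-case result applies. Second, one must handle the possibility that reducing creates a sequence failing the coprimality condition; here one uses the structural fact that $\gcd(n,x_1,x_2,x_3,x_4)=1$ is preserved because we only multiplied by $p$ and $p\nmid\gcd$ of the reductions. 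Throughout, the explicit description of index-$1$ length-$4$ sequences (each such $S$ is equivalent to one of the standard forms, e.g. $(1)(1)(a)(n-1-a)$-type patterns or the "$(1,x_2,x_3,x_4)$ with $x_2+x_3+x_4\equiv0$" normal forms from [10]) lets me check that the property $\ind=1$ transfers back up.

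The innovative method mentioned in the abstract presumably replaces this case analysis with a cleaner argument: I would look for an invariant — perhaps a rational number attached to $S$ via the "fractional part" vector $(\{x_1/n\},\dots,\{x_4/n\})$ lying on the hyperplane summing to an integer — and show that index $1$ is equivalent to this point lying in a specific lattice polytope, a condition stable under the operations above. So the cleanest route: (i) normalize so that $x_1=1$ (possible since $\gcd(n,x_1,x_2,x_3,x_4)=1$ means some coefficient is a unit — wait, that is \emph{not} immediate, so one first argues that WLOG some $x_j$ is coprime to $n$, else all share a common factor contradicting the gcd condition only if there are $\geq2$ primes; this is exactly the delicate point); (ii) with $x_1=1$, minimality gives $x_2+x_3+x_4=n$ or $2n$, and $\ind(S)=1$ becomes a statement about solving $1+a x_2+a x_3+a x_4\equiv$ small, which one solves using that some $x_i$ is a genuine zero-divisor mod $n$.

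\medskip

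\textbf{Main obstacle.} The crux is the first sub-case boundary: showing that WLOG at least one coefficient $x_j$ is coprime to $n$. The gcd hypothesis $\gcd(n,x_1,x_2,x_3,x_4)=1$ does \emph{not} by itself give a unit among the $x_i$ when $n$ has two or more prime factors (e.g. one $x_i$ divisible by $p$, another by $q$). Handling that genuinely multi-prime configuration — where \emph{every} coefficient is a zero-divisor — without being able to normalize to $x_1=1$ is the hard part, and I expect it requires either the reduction-by-$p$ descent combined with the two-prime-power result [9, 18] as an intermediate base case, or the new combinatorial invariant that sidesteps normalization entirely. Getting the descent to preserve all three hypotheses (length $4$, $\gcd(n,6)=1$, overall gcd $=1$) simultaneously while strictly decreasing the induction parameter is where the real work lies.
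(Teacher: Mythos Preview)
Your proposal has a genuine gap, and the route you sketch is not the one the paper takes.

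The descent via $p\cdot S$ does not do what you hope. Even granting that $p\cdot S$ is again minimal zero-sum, there is no clean transfer principle ``$\ind(p\cdot S)=1\Rightarrow\ind(S)=1$'': an optimal generator $h$ for $p\cdot S$ gives you representatives $|mpx_i|_n$ summing to $n$, but this says nothing about representatives of the $x_i$ themselves unless $p$ is a unit, which it is not. Likewise your fallback idea of normalizing to $x_1=1$ is, as you correctly note, unavailable in exactly the configuration the theorem targets. So the proposal never gets off the ground in the multi-prime, all-zero-divisor case you flag as the obstacle.

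The paper's method is entirely different and does not use reducedness, descent, or any normalization to a unit coordinate. The key device is the family of multipliers $y=1+t\alpha$ with $\alpha=n/p$ (or $n/p^{s+1}$) for a prime $p\mid f(x_1)$. Such a $y$ lies in $U(n)$ for all but at most one $t\in[0,p-1]$, it \emph{fixes} every $x_i$ divisible by $p$ (since $yx_i\equiv x_i\pmod n$), and as $t$ varies it sweeps each $p$-coprime coordinate through an arithmetic progression of step $\alpha<n/4$, so one can force $|yx_j|_n<n/2$ for any chosen $p$-coprime $j$. Combining this with the elementary criterion that $\ind(S)=1$ whenever some unit multiple has at least three of the four residues on the same side of $n/2$ (Lemma~2.1), the argument runs by cases on the pattern of the $f(x_i)=\gcd(n,x_i)$: first arrange $x_1=f(x_1)<n/2$; then use a $y$ built from a prime in $f(x_1)$ (or in $\gcd(f(x_1),f(x_2))$) to push a second coordinate below $n/2$ while fixing $x_1$; then a further $y$ from another prime pushes a third coordinate across, again fixing the first two. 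The delicate sub-cases ($p\in\{5,7\}$ with $p^2\nmid n$, where only $p-1$ multipliers are available) are handled by a counting/averaging argument showing some $y$ must give total $3n$. Nothing in this uses the reduced-case papers~[17,19] or any inductive descent on $n$; the hypothesis $\gcd(n,x_i)>1$ is used not to ``drop complexity'' but to supply a prime $p$ for which the $1+t\alpha$ family has a fixed coordinate to anchor the argument.
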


\section{Preliminaries}
Recall that  $G$ always denotes a finite cyclic group of order $|G|=n$. Given real numbers $a, b\in\mr$, we use $[a,b]=\{x\in\mz| a\leq x\leq b\}$ to denote the set of integers between $a$ and $b$.  For $x\in\mz$, we denote by $|x|_n\in[1,n]$ the integer congruent to $x$ modulo $n$.  Let $S=(x_1g)(x_2g)(x_3g)(x_4g)$ be a minimal zero-sum sequence over $G$ such that $\ord(g)=n=|G|$ and $1\leq x_1,x_2,x_3,x_4\leq n-1$. For convenience, we set $f(x_i):=\gcd(n,x_i)$ for $i\in[1,4]$. In what follows we always assume that $\gcd(n,x_1,x_2,x_3,x_4)=1$,  so we have $\gcd(f(x_i),f(x_j),f(x_k))=1$ for any three different $i,j,k$. %In this section we prove several preliminary lemmas.
The following lemma is crucial and will be used frequently in sequel.

According to the assumption of Theorem \ref{mainthm}, the order $n$ of group $G$ is not a prime number (since $1<\gcd(n,x_i)\leq n-1<n$ for some $i\in[1,4]$).
In what follows, we may always assume that $n$ is an arbitrary positive integer such that $\gcd(n,6)=1$ and $n$ is not a prime number unless state otherwise.

\begin{lemma} \label{L2.1}\cite[Remark 2.1]{LP2}\\
(1)  If there exits a positive integer $m$ such that $\gcd(n,m)=1$ and at most one $|mx_i|<\frac{n}{2}$ (or, similarly, at most one $|mx_i|>\frac{n}{2}$), then $\ind(S)=1$.\\
(2) If there exits a positive integer $m$ such that $\gcd(n,m)=1$ and $|mx_1|_n+|mx_2|_n+|mx_3|_n+|mx_4|_n=3n$, then $\ind(S)=1$.
\end{lemma}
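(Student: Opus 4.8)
The plan is to reformulate the index as a minimization over multipliers and then, in each case, exhibit an explicit multiplier that realizes the value $1$. First I would record the basic reformulation. For a unit $s$ (meaning $\gcd(s,n)=1$) put $y_i(s)=|sx_i|_n\in[1,n-1]$; these are nonzero because $S$ is a minimal zero-sum sequence of length $4>1$, so no single term can vanish. Since $\sum_i y_i(s)\equiv 0\pmod n$ and $4\le\sum_i y_i(s)\le 4(n-1)$, we may write $\sum_i y_i(s)=c(s)\,n$ with $c(s)\in\{1,2,3\}$. Ranging over the generators $g'=ag$ of $G$ and setting $s=a^{-1}$ shows $\ind(S)=\min_s c(s)$ over all units $s$; as $c(s)\ge 1$ always, it suffices to produce one unit $s$ with $c(s)=1$.

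The engine is a reflection identity. Because $y_i(-s)=n-y_i(s)$ for every $i$, one gets $c(s)+c(-s)=4$. This already settles part (2): its hypothesis is exactly $c(m)=3$, whence $c(-m)=1$ and $\ind(S)=1$. The same identity lets me merge the two symmetric alternatives of part (1): replacing $m$ by $-m$ interchanges $\{i:y_i(m)<n/2\}$ with $\{i:y_i(m)>n/2\}$, so I may assume throughout part (1) that some unit $m$ has at least three of the $y_i(m)$ exceeding $n/2$.

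For part (1), write $t_i=y_i(m)$ and $c=c(m)$, with at least three $t_i>n/2$. Summing the large terms gives $cn>3n/2$, so $c\ge 2$, and the case $c=3$ is handled by the reflection above. The only genuine case is $c=2$: then at least three large $t_i$ cannot be four (that would force $\sum t_i>2n$), so exactly three are large and one, say $t_j$, is small. Here I pass to the multiplier $s=2m$, which is again a unit precisely because $\gcd(n,6)=1$ forces $\gcd(2,n)=1$. For each large $t_i$ one has $n<2t_i<2n$, so $|2t_i|_n=2t_i-n$, while for the small $t_j$ one has $0<2t_j<n$, so $|2t_j|_n=2t_j$. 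Adding up, $\sum_i y_i(2m)=2\sum_i t_i-3n=4n-3n=n$, i.e. $c(2m)=1$, as desired. (In the mirror situation one uses $s=-2m$ in the same way.)

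The only delicate point, the rest being bookkeeping, is the $c=2$ case, and its subtlety is conceptual rather than computational: the given multiplier $m$ itself need not realize index $1$ (for instance $(6g)(6g)(6g)(4g)$ over $\mz/11\mz$ has $c(1)=2$ yet index $1$), so one cannot simply read the answer off $m$ and must move to a new multiplier. The choice $s=2m$ works because doubling the three large residues makes each wrap around exactly once, subtracting precisely $3n$ and turning $\sum t_i=2n$ into $\sum y_i(2m)=n$. I expect the main thing to check carefully is that all doubled residues indeed fall in $[1,n-1]$ with no unexpected extra wrap-around, which uses only that $n$ is odd and $1\le t_i\le n-1$. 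It is worth emphasizing that invertibility of $2$, hence the hypothesis $\gcd(n,6)=1$, is exactly what makes this maneuver legitimate.
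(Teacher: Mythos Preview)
Your argument is correct. Note, however, that the paper does not itself prove this lemma: it is quoted from \cite[Remark~2.1]{LP2} and stated without proof, so there is no ``paper's own proof'' to compare against. Your self-contained treatment via the reflection identity $c(s)+c(-s)=4$ together with the doubling trick $s\mapsto 2m$ in the residual $c=2$ case is clean and complete; the only hypothesis beyond $S$ being minimal zero-sum that you actually use is $\gcd(n,2)=1$, which is implied by $\gcd(n,6)=1$.
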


Denote by $U(n)$ the unit group of $n$, i.e.  $U(n) =\{k \in \NN |1\leq k\leq n-1, \gcd(k, n)=1 \}$. Thus $|U(n)|=\varphi(n)$ where $\varphi$ is the {\it Euler $\varphi$-function}.
We note that for any $y \in U(n)$ $\ind(S)=\ind(yS)$ where $yS=(|yx_1|_ng)(|yx_2|_ng)(|yx_3|_ng)(|yx_4|_ng)$.

\begin{lemma}\label{L2.2}
Let $p$ be a prime factor of $n$ and  $\alpha=\frac{n}{p}$. %For any given prime factor $p$ of $n$, let $\alpha=\frac{n}{p}$.
Then for any $1\leq v<n$ there exist elements $1+k\alpha, 1+j\alpha\in U(n)$ such that $|v+k\alpha|_n<\frac{n}{2}$ and $|v+j\alpha|_n>\frac{n}{2}$. Moreover, if $\gcd(v,p)=1$, then there exists $y=1+t\alpha\in U(n)$ such that $|yv|_n<\frac{n}{2}$.
\end{lemma}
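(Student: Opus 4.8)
The statement splits into two parts: a "both directions" claim (for any $1\le v<n$ we can push $v$ into both halves of $\mz/n\mz$ by adding a multiple of $\alpha=n/p$ that keeps us in $U(n)$), and a sharper "small only" claim under the extra hypothesis $\gcd(v,p)=1$. The plan is to first pin down which residues mod $p$ the elements $1+k\alpha$ realize: since $\gcd(n,6)=1$ forces $p\ge 5$, as $k$ runs over $[0,p-1]$ the values $1+k\alpha$ run over all residues mod $p$ that are $\equiv 1 \pmod{\alpha}$... more precisely, I would observe that $1+k\alpha \in U(n)$ iff $\gcd(1+k\alpha, p)=1$ (the only new prime that could divide it beyond those already dividing... no — one must be careful: $\gcd(1+k\alpha,n)=1$ requires $1+k\alpha$ coprime to every prime factor $q$ of $n$; for $q\ne p$ we have $q\mid\alpha$ so $1+k\alpha\equiv 1\pmod q$, automatically fine; for $q=p$ we need $1+k\alpha\not\equiv 0\pmod p$, i.e. $k\not\equiv -\alpha^{-1}\pmod p$). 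So exactly one value of $k\in[0,p-1]$ is forbidden, leaving $p-1\ge 4$ admissible multipliers $y=1+k\alpha$.

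Next, for the first part, I would consider the $p$ shifted values $v, v+\alpha, v+2\alpha, \dots, v+(p-1)\alpha$ taken mod $n$. These are $p$ distinct points spaced $\alpha$ apart around the cycle of length $n=p\alpha$, so exactly one lands in each "block" $[j\alpha, (j+1)\alpha)$ for $j\in[0,p-1]$; in particular roughly half of them lie below $n/2$ and half above, and since $p\ge 5$ there are at least two in each half (strictly: at least $\lfloor p/2\rfloor\ge 2$ on each side). Removing the single forbidden shift still leaves at least one admissible shift landing in $(0,n/2)$ and at least one landing in $(n/2,n)$ — here I use $p\ge 5$ so that "lose one point" cannot wipe out an entire side. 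This gives the $1+k\alpha$ and $1+j\alpha$ as required.

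For the "moreover" part, the hypothesis $\gcd(v,p)=1$ means $v\not\equiv 0\pmod p$, which I'd use to guarantee that the element $yv$ we produce is genuinely nonzero mod $n$ in each block and, more importantly, that we have enough freedom: now $y=1+t\alpha$ multiplies $v$, and $|yv|_n = |v+tv\alpha|_n$; as $t$ ranges over admissible values, $tv\alpha \bmod n$ ranges over $\alpha$ times all residues $tv \bmod p$, and since $\gcd(v,p)=1$ this hits every residue class mod $p$ except the one excluded by the unit condition. So again we get $p-1\ge 4$ distinct points $v+s\alpha$ ($s$ running over all but one residue mod $p$), at least two of which lie in $(0,n/2)$; pick one. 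The main obstacle — really the only place the argument could go wrong — is the bookkeeping of the "one forbidden residue": I must make sure the forbidden shift is not the unique one that would have landed in the desired half-interval, and this is exactly why the bound $p\ge 5$ (equivalently $\gcd(n,6)=1$) is essential; with $p=2$ or $p=3$ the statement can fail. I would therefore organize the write-up around counting admissible shifts on each side and checking $\lfloor p/2\rfloor - 1 \ge 1$.
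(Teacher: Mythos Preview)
Your plan is correct and follows essentially the same route as the paper: both arguments first show that among $y=1+k\alpha$ with $k\in[0,p-1]$ at most one fails to lie in $U(n)$ (you write ``exactly one,'' which is only accurate when $p^2\nmid n$; when $p^2\mid n$ every $1+k\alpha$ is a unit, but this only strengthens your count), and both then argue that the $p$ shifts $|v+k\alpha|_n$ are spread across both halves of $[1,n]$ with enough room to survive deleting one. The sole stylistic difference is in how that spread is established: the paper argues by contradiction --- if all $p-1$ admissible residues lay on the same side of $n/2$, then since consecutive steps are $\alpha<n/4$ they would sit in a single interval of length $(p-2)\alpha<n/2$, forcing $p<4$ --- whereas you count directly by noting that exactly one shift lands in each block $[j\alpha,(j+1)\alpha)$, giving $\lfloor p/2\rfloor\ge 2$ on each side. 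For the ``moreover'' clause, your reduction via $yv\equiv v+(tv)\alpha\pmod n$ and the bijection $t\mapsto tv\bmod p$ (valid precisely because $\gcd(v,p)=1$) is exactly what the paper does.
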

\begin{proof}
If $y=1+t\alpha\not\in U(n)$, then there exists prime factor $q|\gcd(n,y)$. If $q\not=p$, we have $q|\alpha$, and thus $q|\gcd(y,\alpha)=1$, a contradiction.  We infer that $p|y$ and $\gcd(p,\alpha)=1$. It is easy to check that at most one $t<p$ such that $y=1+t\alpha\not\in U(n)$. So we may assume that for some $t_0$, all $p-1$ terms $|1+t_0\alpha|_n, |1+(t_0+1)\alpha|_n, \dots, |1+(t_0+p-2)\alpha|_n$ are in $U(n)$. If  all the corresponding terms $|v+t\alpha|_n$ with $t_0\leq t\leq t_0+p-2$ stand in the same side of $\frac{n}{2}$,  then without loss of generality, we may assume that all these terms $|v+t\alpha|_n <\frac{n}{2} $, where $t_0\leq t\leq t_0+p-2$. Since $(v+(t+1)\alpha)-(v+t\alpha)=\alpha < n/4$ ($t_0\leq t \leq p-2$), we conclude that any two consecutive terms $(v+(t+1)\alpha)$ and $(v+t\alpha)$  fall into the same interval $[n\lfloor\frac{v+t\alpha}{n}\rfloor,  n\lfloor\frac{v+t\alpha}{n}\rfloor +\frac{n}{2}]$. Thus all the above terms fall into the same interval, so we have $ b=v+t_0\alpha<v+(t_0+1)\alpha<\cdots<v+(t_0+p-2)\alpha<b+\frac{n}{2}$. Hence we infer that $(p-2)\alpha<\frac{n}{2}$, which implies that $p<4$, giving a contradiction as $\gcd(n,6)=1$ and $p|n$. Thus the first statement holds.

Next assume that $\gcd(v, p)=1$. We note that if $0\leq t_1\not= t_2\leq p-1$, then $|v(1+t_1\alpha)|_n\not= |v(1+t_2\alpha)|_n$. Thus as a set $\{|v|_n, |v(1+\alpha)|_n, \dots, |v(1+(p-1)\alpha)|_n\}= \{|v|_n, |v+\alpha|_n, \dots, |v+(p-1)\alpha|_n\}$. As above, we can prove that there exists $y=1+t\alpha \in U(n)$ such that $|yv|_n < \frac{n}{2}$.
\end{proof}

\begin{remark}\label{RM} We note that if $p^2|n$, then $y=1+t\alpha \in U(n)$ for any $t \in [0,p-1] $. If $p|n$ and  $p^2\not|n$, then $\gcd(p,\alpha)=1$, and so there is a unique $t\in[0,p-1]$ such that $y=1+t\alpha\not\in U(n)$. In particular, if $v\in[1,n-1]$ and $p|v$, then $|yv|_n=v$ for any $y=1+t\alpha$.
\end{remark}

\begin{coro} \label{C2.3}
If $p^s|\beta < n$, $p^{s+1}\not\!|\beta$ and $p^{s+1}|n$, then there exists $y=1+\frac{tn}{p^{s+1}}\in U(n)$ (with $0\leq t < p$) such that $|y\beta|_n<\frac{n}{2}$.
\end{coro}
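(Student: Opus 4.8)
\medskip\noindent\textit{Proof sketch.} The plan is to recognize this corollary as a relative version of the ``moreover'' clause of Lemma~\ref{L2.2}. That clause is exactly the case $s=0$: there the hypotheses read $p\mid n$ and $\gcd(\beta,p)=1$, and the conclusion is the existence of $y=1+tn/p\in U(n)$ with $0\le t<p$ and $|y\beta|_n<n/2$. The general case will be reduced to $s=0$ by descending to the smaller modulus $n_1:=n/p^{s}$.

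First I would write $n=p^{s+1}m$ with $m\in\NN$ (possible since $p^{s+1}\mid n$) and factor $\beta=p^{s}u$ with $u\in[1,n_1-1]$ and $\gcd(u,p)=1$ (possible since $p^{s}\mid\beta$ and $p^{s+1}\nmid\beta$). Then $n_1=pm$, so $n_1$ is divisible by $p$ and $\gcd(n_1,6)=1$, hence Lemma~\ref{L2.2} applies to $n_1$. Applying its ``moreover'' part to the modulus $n_1$ and the residue $u$ (which is prime to $p$) produces some $t\in[0,p-1]$ with
\[
y:=1+tm\in U(n_1)\qquad\text{and}\qquad |yu|_{n_1}<\tfrac{n_1}{2}.
\]
Since $m=n_1/p=n/p^{s+1}$, this $y$ already has the prescribed form $y=1+tn/p^{s+1}$ with $0\le t<p$, so it remains only to upgrade its two properties from the modulus $n_1$ to the modulus $n$.

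For the unit condition, note that any prime divisor of $\gcd(y,n)$ or of $\gcd(y,n_1)$ must equal $p$: a prime $q\ne p$ dividing $n=p^{s+1}m$ (or $n_1=pm$) divides $m$, hence divides $y-1=tm$, so if it also divided $y$ it would divide $1$. Since $p\mid n_1$, the equality $\gcd(y,n_1)=1$ forces $p\nmid y$, and then $p$ being the only possible prime divisor of $\gcd(y,n)$ gives $\gcd(y,n)=1$, i.e.\ $y\in U(n)$. For the size estimate, because $p^{s}\mid n$ and $n_1=n/p^{s}$ one has the exact identity $|p^{s}x|_n=p^{s}|x|_{n_1}$ for every integer $x$ (both sides equal the representative in $[1,n]$ of $p^{s}x$). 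Hence
\[
|y\beta|_n=|p^{s}(yu)|_n=p^{s}|yu|_{n_1}<p^{s}\cdot\tfrac{n_1}{2}=\tfrac{n}{2},
\]
which completes the proof.

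The argument is short because all the real content is already in Lemma~\ref{L2.2}; the only place that needs genuine care, and the main (if modest) obstacle, is the two-directional bookkeeping across the change of modulus $n\mapsto n_1$: one must verify both that membership in $U(n_1)$ already forces membership in $U(n)$ (the obstruction being the single prime $p$ in either case) and that the strict inequality $|yu|_{n_1}<n_1/2$ rescales correctly after multiplication by $p^{s}$. A more computational alternative, bypassing the reduction, is to use $y\beta\equiv\beta+(tu\bmod p)\tfrac{n}{p}\pmod n$ and observe that the $p$ numbers $\{\,|\beta+j\tfrac{n}{p}|_n:0\le j<p\,\}$ form a coset of the order-$p$ subgroup of $\mz/n\mz$, at least $(p-1)/2\ge 2$ of whose elements lie in $[1,(n-1)/2]$, whereas at most one admissible value of $t$ is ruled out by the unit condition; since $p\ge 5$, a valid $t$ survives.
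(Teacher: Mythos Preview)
Your proof is correct and follows essentially the same route as the paper's: both reduce to the case $s=0$ by passing to the modulus $n_1=n/p^{s}$, apply the ``moreover'' clause of Lemma~\ref{L2.2} to $u=\beta/p^{s}$ there, and then rescale via $|p^{s}x|_n=p^{s}|x|_{n_1}$. Your explicit verification that $y\in U(n_1)$ forces $y\in U(n)$ (because $n$ and $n_1$ share the same prime divisors) fills in a step the paper states without justification, and your alternative counting argument at the end is a nice aside but not needed.
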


%\begin{proof}
%Let $\alpha=\frac{n}{p^{s+1}}$. If $s=0$, the result follows from Lemma~\ref{L2.2}. If $s>0$, as in the proof of Lemma~\ref{L2.2}, we can show that there exists at most one $t<p$ such that %$y=1+t\alpha\not\in U(n)$.
%If $0\leq t_1,t_2<p$ and $(1+t_1\alpha)\beta=(1+t_2\alpha)\beta(\mod n)$, then $n|(t_1-t_2)\alpha\beta$ and thus $p^{s+1}|(t_1-t_2)\beta$, $p|(t_1-t_2)$, so $t_1=t_2$. Moreover, for any %$y=1+t\alpha$, we have $\frac{n}{p}|(y\beta-\beta)=t\alpha\beta=t\times\frac{\beta}{p^s}\times\frac{n}{p}$. If for every $y=1+t\alpha\in U(n)  (\hbox{where}~~t\in[0,p-1])$ we have  %$|y\beta|_n>\frac{n}{2}$, then as in Lemma~\ref{L2.2} we infer that $p<4$, giving a contradiction. Thus there exists $y=1+ t\alpha \in U(n)$ such that $|y\beta|_n<\frac{n}{2}$ and  the %corollary is proved.
%\end{proof}

\begin{proof}
Let $\beta_1 =\frac{\beta}{p^s}, n_1=\frac{n}{p^s}$ and $\alpha =\frac{n_1}{p}=\frac{n}{p^{s+1}}$. Then $1\leq \beta_1 < n_1$ and $\gcd(\beta_1, p)=1$. By Lemma~\ref{L2.2}, there exists $y=1+t\alpha \in U(n_1) \subseteq U(n)$ such that $|y\beta_1|_{n_1} <\frac{n_1}{2}$. Thus $|y\beta|_n=|y\beta_1p^s|_n= p^s|y\beta_1|_{n_1}<p^s\frac{n_1}{2}=\frac{n}{2}$ as desired.
\end{proof}

\begin{lemma} \label{L2.4}
If $f(x_1)=f(x_2)=d>1$, then $\ind(S)=1$.
\end{lemma}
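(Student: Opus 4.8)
The plan is to fix a prime divisor $p$ of $d$ and exploit the units of the form $y=1+t\,\frac{n}{p}$. Since $\gcd(n,6)=1$ we have $p\ge 5$; from the standing identities $\gcd(f(x_1),f(x_2),f(x_3))=\gcd(f(x_1),f(x_2),f(x_4))=1$ one deduces $\gcd(d,x_3)=\gcd(d,x_4)=1$, and from $x_1+x_2+x_3+x_4\equiv 0\pmod n$ together with $p\mid x_1+x_2$ and $p\mid n$ one deduces $p\mid x_3+x_4$; thus $p\mid x_1$, $p\mid x_2$, $p\nmid x_3$, $p\nmid x_4$. For $y=1+t\,\frac{n}{p}\in U(n)$ we have $|yx_1|_n=x_1$ and $|yx_2|_n=x_2$ by Remark~\ref{RM}, whereas, as $t$ varies, $|yx_3|_n$ and $|yx_4|_n$ run — in a correlated fashion — over the cosets $x_3+\langle n/p\rangle$ and $x_4-\langle n/p\rangle$ of the order-$p$ subgroup of $\mz/n$; in particular $|yx_3|_n+|yx_4|_n\equiv x_3+x_4\pmod n$. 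Since $\ind(yS)=\ind(S)$ and $\ind\ge 1$ for every nonempty zero-sum sequence, it suffices to produce one admissible $t$ (and there are at least $p-1$ of them in $[0,p-1]$, by Lemma~\ref{L2.2}) for which $yS$ satisfies the hypothesis of Lemma~\ref{L2.1}.

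I would first clear away the easy cases. The sum $x_1+x_2+x_3+x_4$ is $n$, $2n$ or $3n$; if it is $n$ then $\|S\|_g=1$, and replacing $S$ by $(n-1)S$ sends each $x_i$ to $n-x_i$, hence preserves $f(x_i)$ and the hypothesis $f(x_1)=f(x_2)=d$ while turning the value $3n$ into $n$. So I may assume the sum is $2n$, and, by the same involution, that $x_1+x_2<n$, hence $x_3+x_4>n$ (note $x_1+x_2\ne n$ by minimality of $S$). Writing $x_3+x_4=n+e$ with $0<e<n$, one has $d\mid e$. A short computation then shows that the total value of $yS$ for $y=1+t\,\frac{n}{p}$ equals $2n-c(t)\,n$, where $c(t)\in\{0,1\}$ detects whether the shifted pair $(x_3,x_4)$ wraps around $n$; so the whole lemma reduces to finding an admissible $t$ with $c(t)=1$, which makes the total equal $n$.

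The heart of the argument is then a counting step: summing $c(t)$ over all $t\in[0,p-1]$ gives $\lfloor px_3/n\rfloor+\lfloor px_4/n\rfloor-(p-1)$, which is $\ge 0$ because $x_3+x_4>n$. Hence, as soon as this count is $\ge 2$ — or is $\ge 1$ and $p^2\mid n$, so that all $p$ shifts are admissible — one finds an admissible $t$ with $c(t)=1$, using $p\ge 5$, and the lemma follows from Lemma~\ref{L2.1}. What remains, and what I expect to be the main obstacle, is the thin boundary regime in which $x_1+x_2$ is close to $n$ (within $2n/p$), so that the count is $0$ or $1$, possibly compounded by the interference of the single non-admissible shift when $p^2\nmid n$; in this regime $e=n-x_1-x_2$ is a positive multiple of $d$ smaller than $2n/p$, and in the worst case this happens for every prime $p\mid d$ at once. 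To settle it I would break the symmetry further by first replacing $S$ by $uS$ for a carefully chosen unit $u$ — using that $|ux_1|_n+|ux_2|_n$ differs from $n$ (or from $2n$) by $|ue|_n$, and that $|ue|_n$ can be moved out of the obstructing range via Corollary~\ref{C2.3} and Lemma~\ref{L2.2} — after which either the counting step applies to $uS$, or at least three of the four resulting values lie on the same side of $n/2$ and Lemma~\ref{L2.1}(1) finishes. Executing this final step, keeping track throughout of which of $x_1,x_2$ and of $x_3,x_4$ exceed $n/2$, is where the bulk of the case-checking will sit.
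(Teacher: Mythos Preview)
Your plan is considerably more involved than what is needed, and you yourself flag the main difficulty as unresolved: the ``thin boundary regime'' where $x_1+x_2$ is within $2n/p$ of $n$ is left to an unspecified case-check. Your proposed fix---moving $|ue|_n$ out of range via Corollary~\ref{C2.3} or Lemma~\ref{L2.2}---is problematic as written, because $e=n-(x_1+x_2)$ is itself a multiple of $d$, hence of $p$, so the very shifts $1+t\,n/p$ you have been using \emph{fix} $e$; you would have to pass to another prime and redo the whole analysis for the new sequence, with no guarantee of termination. So as it stands this is a sketch with a genuine gap at the end.

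The paper's proof sidesteps the wrap-counting entirely by decoupling the problem into two short, independent steps. First, after normalising so that $x_1=d$ (possible since $f(x_1)=d$), one has $x_2=n-kd$ for some integer $k\ge 2$ (minimality forbids $k=1$). If $kd>n/2$ then already $x_1,x_2<n/2$; otherwise repeatedly double (legal since $2\in U(n)$) until $2^s kd\in(n/4,n/2)$, and then $u=2^{s+1}$ gives $|ux_1|_n=2^{s+1}d\le 2^{s}kd<n/2$ and $|ux_2|_n=n-2^{s+1}kd<n/2$. Second, with $x_1,x_2<n/2$ in hand, pick any prime $p\mid d$, set $\alpha=n/p$, and apply Lemma~\ref{L2.2} to $x_3$ alone (noting $\gcd(p,x_3)=1$) to get $y=1+j\alpha\in U(n)$ with $|yx_3|_n<n/2$; since $y$ fixes $x_1$ and $x_2$, three of the four residues are now below $n/2$ and Lemma~\ref{L2.1}(1) finishes.

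The key idea you missed is to treat $x_1,x_2$ and $x_3$ with \emph{different} units: a power of $2$ to position $x_1,x_2$ simultaneously (exploiting $k\ge 2$ in $x_1=d$, $x_2=n-kd$), and only afterwards a $1+t\alpha$ shift to handle $x_3$, ignoring $x_4$ altogether. Trying to use the $1+t\alpha$ shifts to control the full sum $|yx_3|_n+|yx_4|_n$ is what generates your boundary case.
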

\begin{proof}
We first show that there exists $u\in U(n)$ such that $|ux_1|_n<\frac{n}{2}$ and  $|ux_2|_n<\frac{n}{2}$. By multiplying $S$ by a unit, we may assume that $x_1=d$ and $x_2=n-kd$, where $k\in U(n)$. If $kd>\frac{n}{2}$, then we are done. So  we may assume that  $kd<\frac{n}{2}$. Since $S$ is a minimal zero-sum sequence, we conclude that $k\not=1$, so $x_1=d<\frac{n}{2k}\leq\frac{n}{4}$.  If $kd>\frac{n}{4}$, then $2x_1=2d\leq kd<\frac{n}{2}$ and $\frac{n}{2}<2kd<n$.
Let $u=2$. Then we get $|ux_1|_n<\frac{n}{2}$ and $|ux_2|_n<\frac{n}{2}$ as desired. If $kd<\frac{n}{4}$, then there exists $s$ such that $2^{s}x_1<\frac{n}{4}\leq 2^skd<\frac{n}{2}$. Let $u=2^{s+1}$. Then $|ux_1|_n<\frac{n}{2}$ and $|ux_2|_n<\frac{n}{2}$ as desired.

Next we  may assume that $x_1 <\frac{n}{2}$ and $x_2<\frac{n}{2}$.  Let $p$ be a prime factor of $d$ and $\alpha=\frac{n}{p}$. Then $\gcd(p,x_3)=1$. By Lemma~\ref{L2.2}, there exists $y=1+j\alpha\in U(n)$ such that $|yx_3|_n<\frac{n}{2}$. Since $y$ fixes $x_1$ and $x_2$ (i.e. $|yx_1|_n=x_1$ and $|yx_2|_n=x_2$),   by (1) of Lemma~\ref{L2.1}, we have $\ind(S)=\ind(yS)=1$.
\end{proof}

Next we assume that $n$ has at least three prime factors. Then for every prime $p|n$, we have $p\geq 11$ or $\alpha=\frac{n}{p}\geq 55$.  This estimate for $\alpha$ will be used in Lemmas \ref{L2.5}-\ref{L2.6}, and then in Lemmas \ref{L2.8}-\ref{L2.9}.
\begin{lemma} \label{L2.5}
If $f(x_1)=7$, $\gcd(f(x_1),f(x_2))=\gcd(f(x_1),f(x_3))=\gcd(f(x_1),f(x_4))=1$ and $7^2\not|n$, then $\ind(S)=1$.
\end{lemma}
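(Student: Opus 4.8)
The plan is to reduce to a situation where part (1) of Lemma~\ref{L2.1} applies — that is, to find a unit $m$ after which at least three of the four values $|mx_i|_n$ lie on the same side of $n/2$. We are given $f(x_1)=7$ and that $7$ is coprime to $f(x_2),f(x_3),f(x_4)$; since $n$ has at least three prime factors, the estimate noted just before the lemma gives either $p\geq 11$ or $n/p\geq 55$ for each prime $p\mid n$, which we will use to keep the ``step sizes'' $n/p$ small relative to $n$. First I would normalize by a unit so that $x_1=7$ (possible since $\gcd(x_1/7,n)$ is invertible after dividing out), and note that because $7^2\nmid n$ we have $\gcd(7,n/7)=1$, so by Remark~\ref{RM} the units of the form $y=1+t(n/7)$ with $t\in[0,6]$ are available except for at most one value of $t$; all such $y$ fix $x_1=7$ (indeed $|yx_1|_n=7$ since $7\mid x_1$ and $7\mid n$).

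The core of the argument is then to control $x_2,x_3,x_4$ simultaneously using these $7$-fixing units together with the other tools. For each prime $q\mid\gcd(n,x_j)$ (some $j\in\{2,3,4\}$) we have $\gcd(q,x_1)=1$, so by Corollary~\ref{C2.3} (or Lemma~\ref{L2.2} applied to the relevant prime power) we can push any one of $x_2,x_3,x_4$ to the small side of $n/2$ by a unit that simultaneously fixes the coordinates divisible by $q$. The strategy is to first apply such a unit to make, say, $|x_3|_n<n/2$ and $|x_4|_n<n/2$ — using that $f(x_3)$ and $f(x_4)$ are coprime to $7$, and handling the overlap between $f(x_3)$ and $f(x_4)$ via the coprimality triple condition $\gcd(f(x_i),f(x_j),f(x_k))=1$ — and then among the remaining $7$-fixing units $1+t(n/7)$, $t\in[0,6]$, with at most one excluded, find one that moves $x_2$ into the interval $[\,n\lfloor x_2/n\rfloor, n\lfloor x_2/n\rfloor + n/2\,]$ without dragging $x_3$ or $x_4$ across $n/2$; the bound $(p-2)(n/p)<n/2$ fails for $p=7$, so with six available shifts one of them lands $x_2$ on the same side as $x_3,x_4$, and then at least three of the four are on one side and Lemma~\ref{L2.1}(1) finishes it. Alternatively, in the subcase where the shifts force a $3n$-type balance, part (2) of Lemma~\ref{L2.1} applies.

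I expect the main obstacle to be the bookkeeping when $f(x_2),f(x_3),f(x_4)$ share prime factors among themselves: one $7$-fixing shift may be forced to move two of $x_2,x_3,x_4$ together (if they share a prime), so I must verify that the interval-width estimate $(\text{step})<n/4$ — which follows from $n/p\geq 55$ being small compared with $n$ only after we know $n$ is not too small, hence the ``$p\geq 11$ or $n/p\geq 55$'' dichotomy — still guarantees consecutive shifted values stay in a common half-interval. The delicate point is ensuring that after placing $x_3,x_4$ on the small side, enough of the seven residues $1+t(n/7)$ remain ``safe'' for $x_3,x_4$ while still offering a good placement for $x_2$; this is where the explicit smallness of $n/7$ relative to $n$ (again via the three-prime-factor hypothesis) is essential, and it is the step where a careless estimate would break the proof.
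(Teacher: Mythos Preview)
Your plan has a structural gap. The $7$-fixing units $y=1+t\alpha$ (with $\alpha=n/7$) fix $x_1$ but move \emph{all three} of $x_2,x_3,x_4$ simultaneously, since $\gcd(7,x_i)=1$ for $i=2,3,4$; there is no mechanism to use them to adjust $x_2$ ``without dragging $x_3$ or $x_4$ across $n/2$.'' Conversely, a unit built from a prime $q\mid f(x_j)$ with $j\ge 2$ does \emph{not} fix $x_1$ (since $\gcd(q,7)=1$), so your preliminary step of forcing $|x_3|_n,|x_4|_n<n/2$ destroys the normalization of $x_1$. Moreover, that step presupposes $f(x_3),f(x_4)>1$, which is not part of the hypothesis; in fact Lemma~\ref{L2.9} invokes the present lemma precisely when $f(x_2)=f(x_3)=f(x_4)=1$. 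So the two-stage reduction you sketch cannot be carried out, and the vague fallback to a ``$3n$-type balance'' is not an argument.

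The paper's proof proceeds instead by a counting/averaging argument over the six admissible units $y=1+t\alpha\in U(n)$. After normalizing to $x_1=(n-7)/2$ (not $x_1=7$; the large value is essential for the sum estimate), one assumes that for every such $y$ exactly one of $|yx_2|_n,|yx_3|_n,|yx_4|_n$ lies below $n/2$, else Lemma~\ref{L2.1}(1) applies directly. A pigeonhole count on how many $y$ place both $|yx_3|_n$ and $|yx_4|_n$ above $n/2$ then forces, in the only surviving case, that for each $i\in\{2,3,4\}$ exactly four of the six $y$ give $|yx_i|_n>n/2$; writing $|yx_i|_n=x_i'+t_i\alpha$ this pins down $x_i'>\alpha/2$. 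Summing $M=\sum_{y}\sum_{i}|yx_i|_n$ over all six $y$ yields $M>12n$, so some $y$ must give $\sum_i|yx_i|_n=3n$, and Lemma~\ref{L2.1}(2) finishes. The idea you are missing is this global sum estimate: rather than trying to engineer three coordinates on one side for a \emph{specific} $y$, one shows the total over all available $y$ is too large for every $y$ to produce sum $2n$.
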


\begin{proof}
Let $\alpha=\frac{n}{7}$. As noted in  Remark~\ref{RM} there exist exactly six $t\in[0,6]$ such that $y=1+t\alpha\in U(n)$. By multiplying $S$ with a suitable unit, we may assume that $x_1=\frac{n-7}{2}$. %or $x_1=\frac{n-21}{2}$.  %We may also assume that exactly one of $x_2, x_3, x_4$ is less than $\frac{n}{2}$; for otherwise, it follows from Lemma~\ref{L2.1} that %$\ind(S)=1$, and we are done.
Note that $|yx_1|_n=x_1 < \frac{n}{2}$ for any $y=1+t\alpha\in U(n)$. We may also assume that exactly one of $|yx_2|_n, |yx_3|_n, |yx_4|_n$ is less than $\frac{n}{2}$. For otherwise, it follows from Lemma~\ref{L2.1} that $\ind(S)=1$, and we are done.

We claim that there exist at most two elements $y=1+t\alpha\in U(n)$  such that both $|yx_3|_n>\frac{n}{2}$ and $|yx_4|_n>\frac{n}{2}$. For otherwise, we infer that either at least five $|yx_3|_n$ or at least five $|y'x_4|_n$ are greater than $\frac{n}{2}$. As in the proof of Lemma~\ref{L2.2}, this implies that $(5-1)\alpha < \frac{n}{2}$, so $\frac{4}{7}n < \frac{n}{2}$, giving a contradiction.

If there exists at most one element $y=1+t\alpha\in U(n)$ such that $|yx_3|_n>\frac{n}{2}$ and $|yx_4|_n>\frac{n}{2}$, then there exist at least five elements $y=1+t\alpha\in U(n)$ such that $|yx_3|_n$ and $|yx_4|_n$ stand in different sides of $\frac{n}{2}$. Hence by the assumption that exactly one of $|yx_2|_n, |yx_3|_n, |yx_4|_n$ is less than $\frac{n}{2}$, we conclude that  $|yx_2|_n>\frac{n}{2}$ for all these five $y$. As above, we have $(5-1)\alpha < \frac{n}{2}$,  giving a contradiction again.

%So we can find a invertible $y$ such that three of $|yx_1|_n$, $|yx_2|_n$, $|yx_3|_n$ and $|yx_4|_n$ are less that $\frac{n}{2}$, and $\ind(S)=1$.

Next we may assume there  exist exactly two elements $y=1+t\alpha\in U(n)$ such that $|yx_3|_n>\frac{n}{2}$ and $ |yx_4|_n>\frac{n}{2}$. Thus exactly four $|yx_3|_n >\frac{n}{2}$ and exactly four $|y'x_4|_n>\frac{n}{2}$. A similar discussion on $x_2$ and  $x_3$ shows that exactly four $|y''x_2|_n >\frac{n}{2}$.

%To calculate sum $M$ of  all elements like $|yx_i|_n$($i=1,2,3,4$), $y=1+t\alpha\in U(n)$($t\in[0,6]$),
Since $|yx_1|_n=x_1$ for any $y=1+t\alpha\in U(n)$ ($t\in[0,6]$),  we have
\beqs M&=&\sum_{y=1+t\alpha\in U(n)\atop t\in[0,6]}\sum_{i=1}^4|yx_i|_n=\sum_{i=1}^4\sum_{y=1+t\alpha\in U(n)\atop t\in[0,6]} |yx_i|_n\\
&\geq& 6\times\frac{n-7}{2}\\&&
+(x_2'+(x_2'+\alpha)+(x_2'+3\alpha)+(x_2'+4\alpha)+(x_2'+5\alpha)+(x_2'+6\alpha))\\
&&+(x_3'+(x_3'+\alpha)+(x_3'+3\alpha)+(x_3'+4\alpha)+(x_3'+5\alpha)+(x_3'+6\alpha))\\
&&+(x_4'+(x_4'+\alpha)+(x_4'+3\alpha)+(x_4'+4\alpha)+(x_4'+5\alpha)+(x_4'+6\alpha))\\
&=&3n-21+6x_2'+6x_3'+6x_4'+57\alpha,\eeqs
where $|yx_i|_n=x_i'+ t_i\alpha$ and $x_i'<\alpha$.

Since there are exactly four $y$ such that $|yx_i|_n > \frac{n}{2}$ for  $i\in[2,4]$, we conclude that $x_i'+3\alpha>\frac{n}{2}$, which implies that $x_i'>\frac{\alpha}{2}$ for $i\in[2,4]$.  Now we infer that
\beqs M >3n-21+66\alpha=12n+3(\alpha-7)>12n,\eeqs
and thus there exists at least one $y=1+t\alpha$ such that $|yx_1|_n+|yx_2|_n+|yx_3|_n+|yx_4|_n=3n$. By Lemma~\ref{L2.1}, we get  $\ind(S)=1$ as desired.
\end{proof}

\begin{lemma}\label{L2.6}
If $f(x_1)=5$, $\gcd(f(x_1),f(x_2))=\gcd(f(x_1),f(x_3))=\gcd(f(x_1),f(x_4))=1$ and $5^2\not|n$, then $\ind(S)=1$.
\end{lemma}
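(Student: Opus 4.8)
The plan is to run the strategy of Lemma~\ref{L2.5} with the prime $7$ replaced by $5$, and then to add an extra argument for the boundary case that the averaging inequality of Lemma~\ref{L2.5} no longer resolves. Put $\alpha=n/5$. Since $5^2\nmid n$ we have $\gcd(5,\alpha)=1$, so exactly one $t\in[0,4]$ gives $1+t\alpha\notin U(n)$; the other four units $y=1+t\alpha$ (the \emph{admissible} units) all fix every multiple of $5$, in particular $x_1$. As the $U(n)$-orbit of $x_1$ equals $\{5v:v\in U(\alpha)\}$ and $(n-5)/2=5(\alpha-1)/2$ lies in it (using that $n$ is odd and $5\mid n$), after replacing $S$ by a unit multiple we may assume $x_1=\frac{n-5}2<\frac n2$, so $|yx_1|_n=x_1$ for every admissible $y$. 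Now, for any admissible $y$: if at least two, or none, of $|yx_2|_n,|yx_3|_n,|yx_4|_n$ are $<\frac n2$, then (together with $|yx_1|_n<\frac n2$) at most one of the four terms lies on one of the two sides of $\frac n2$, and $\ind(S)=1$ by Lemma~\ref{L2.1}(1). So we may assume each admissible $y$ has exactly one of $|yx_2|_n,|yx_3|_n,|yx_4|_n$ below $\frac n2$; then each sum $\sum_i|yx_i|_n$ exceeds $n$, hence equals $2n$ or $3n$, and the value $3n$ finishes by Lemma~\ref{L2.1}(2). Thus the core case is that all four admissible sums equal $2n$; taking $y=1$ this gives $x_1+x_2+x_3+x_4=2n$.

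In this core case I would pin the configuration down exactly as in the proof of Lemma~\ref{L2.2}. For $i\in[2,4]$ we have $5\nmid x_i$, so $\{|yx_i|_n:t\in[0,4]\}=\{s_i+j\alpha:0\le j\le4\}$ with $s_i=|x_i|_\alpha$, the terms exceeding $\frac n2$ being those with $j\ge3$, plus $j=2$ when $s_i>\frac\alpha2$; moreover the omitted index is $j_i^{\mathrm{excl}}\equiv-\,\overline{(\alpha\bmod5)}\,s_i\pmod5$. This forces $b_i:=\#\{\text{admissible }y:|yx_i|_n>\tfrac n2\}\le3$ and $c_i:=4-b_i\le2$, and since $c_2+c_3+c_4=4$ the multiset $\{b_2,b_3,b_4\}$ is $\{2,3,3\}$; relabelling, $b_2=2$ and $b_3=b_4=3$, which forces $s_3,s_4>\frac\alpha2$. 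Summing ``$\sum_i|yx_i|_n=2n$'' over the four admissible $y$ and using $\sum_y|yx_i|_n=4s_i+(10-j_i^{\mathrm{excl}})\alpha$ gives $4(s_2+s_3+s_4)=\alpha(j_2^{\mathrm{excl}}+j_3^{\mathrm{excl}}+j_4^{\mathrm{excl}})+10$; combining this with $j_3^{\mathrm{excl}},j_4^{\mathrm{excl}}\in\{0,1\}$ (forced by $b_3=b_4=3$), with $s_i<\alpha$, and with a parity check modulo $4$ (recall $\alpha$ is odd), one gets $j_2^{\mathrm{excl}}+j_3^{\mathrm{excl}}+j_4^{\mathrm{excl}}\in\{2,6\}$. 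The value $2$ is impossible since it yields $s_2+s_3+s_4=\frac{\alpha+5}2<\alpha<s_3+s_4$. Hence the sum is $6$, which forces $j_2^{\mathrm{excl}}=4$, $j_3^{\mathrm{excl}}=j_4^{\mathrm{excl}}=1$, $s_2>\frac\alpha2$, and finally $s_2,s_3,s_4\in\{\frac{\alpha+1}2,\frac{\alpha+3}2\}$ with exactly one of them equal to $\frac{\alpha+3}2$ (together with $\alpha\equiv3\pmod{10}$ and corresponding congruences for the $x_i$ modulo $5$).

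The remaining configuration is extremely rigid, and --- crucially --- it cannot be pushed to a contradiction: all four admissible units have sum $2n$, so one must exhibit a different generator. I would use the restricted shape of $x_2,x_3,x_4$ modulo $\alpha$ (each $\equiv\frac{\alpha\pm1}2$) together with $x_2+x_3+x_4=\frac{3n+5}2$ to choose, by the Chinese Remainder Theorem, a unit $y^{*}$ (prescribing its residues modulo $5$ and modulo $\alpha$, equivalently forcing $|y^{*}x_1|_n$ to be a suitable small multiple of $5$) for which $|y^{*}x_2|_n,|y^{*}x_3|_n,|y^{*}x_4|_n$ are all small; then $x_1+x_2+x_3+x_4=2n$ makes them sum to exactly $n-|y^{*}x_1|_n$, so $\sum_i|y^{*}x_i|_n=n$ and $\ind(S)=1$ (alternatively one targets sum $3n$ and invokes Lemma~\ref{L2.1}(2)).

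The main obstacle is precisely this last step. For $p=7$ the corresponding averaged quantity satisfied the \emph{strict} inequality $M>12n$, which closed the argument inside the family $\{1+t\alpha\}$; for $p=5$ the analogous $M$ equals $8n$ exactly in the residual case, and that residual case genuinely occurs, so one is forced to leave that family and produce an explicit good generator. Checking at the same time that $y^{*}$ is a unit and that the three ``small'' images add up to precisely $n-|y^{*}x_1|_n$ is the delicate part; everything before it is a routine transcription of Lemmas~\ref{L2.2} and~\ref{L2.5} with $7$ replaced by $5$.
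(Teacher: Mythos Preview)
The paper's entire proof of Lemma~\ref{L2.6} is the single sentence ``The proof is similar to that of the above lemma,'' i.e.\ it asserts that the argument of Lemma~\ref{L2.5} with $7$ replaced by $5$ goes through unchanged. Your proposal is far more detailed than this, and in fact uncovers a genuine obstruction to the paper's claim: for $p=5$ there are only four admissible units $y=1+t\alpha$, and the forced multiset $\{|A_2|,|A_3|,|A_4|\}=\{2,3,3\}$ yields $M=8n$ \emph{exactly} rather than $M>8n$, so the averaging contradiction of Lemma~\ref{L2.5} does not fire. Your reduction of the surviving configuration to $\alpha\equiv3\pmod{10}$, $s_2=(\alpha+3)/2$, $s_3=s_4=(\alpha+1)/2$, $j_2^{\mathrm{excl}}=4$, $j_3^{\mathrm{excl}}=j_4^{\mathrm{excl}}=1$ is correct, and this residual case is not vacuous: for $n=715=5\cdot11\cdot13$ the sequence $S=(355,\,73,\,358,\,644)$ satisfies every hypothesis of Lemma~\ref{L2.6}, is a minimal zero-sum sequence, and has $\sum_i|yx_i|_n=2n$ for \emph{all four} admissible $y\in\{1,144,287,573\}$. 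Thus the paper's one-line proof is, on its face, incomplete, and your proposal is right to look for a unit outside the family $\{1+t\alpha\}$.

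The gap in your proposal is precisely the last step. You propose to pick $y^{*}$ by CRT (prescribing $y^{*}\bmod 5$ and $y^{*}\bmod\alpha$) so that all four $|y^{*}x_i|_n<n/2$, but you do not carry this out. Fixing $y^{*}\equiv -2\pmod\alpha$ (to make $|y^{*}x_1|_n=5$) pins down $|y^{*}x_i|_n\bmod\alpha$ for $i\ge2$, but the choice of residue $r=y^{*}\bmod5$ that puts each $|y^{*}x_i|_n$ in the lower half depends on the unknown integers $q_i=\lfloor x_i/\alpha\rfloor$, which in turn are only constrained by the ``exactly one below $n/2$'' condition at each admissible $t$. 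In the $n=715$ example one can check that $y^{*}=139$ works ($\sum_i|y^{*}x_i|_n=715$), so the lemma is true there, but turning your CRT sketch into a proof requires a case analysis over the admissible $(q_2,q_3,q_4)$ that you have not supplied. Everything before that point is sound and, indeed, more careful than the paper itself.
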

\begin{proof}
The proof is similar to that of the above lemma.
\end{proof}

\begin{lemma}\label{L2.7}
If $\gcd\big(f(x_1),f(x_2)\big)=d>1$, then $\ind(S)=1$.
\end{lemma}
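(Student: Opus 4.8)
The plan is to mimic the strategy of Lemmas~\ref{L2.4}--\ref{L2.6}: reduce to a configuration to which Lemma~\ref{L2.1} applies. Fix a prime $p\mid d=\gcd\big(f(x_1),f(x_2)\big)$, so $p\mid x_1$ and $p\mid x_2$; since $\gcd\big(f(x_1),f(x_2),f(x_3)\big)=\gcd\big(f(x_1),f(x_2),f(x_4)\big)=1$ we also get $p\nmid x_3$ and $p\nmid x_4$. Put $\alpha=n/p$. The crucial point is that every $y=1+t\alpha\in U(n)$ satisfies $|yx_1|_n=x_1$ and $|yx_2|_n=x_2$ (because $\alpha x_i\equiv 0\pmod n$ once $p\mid x_i$), so the whole one-parameter family $\{1+t\alpha\}$ is available for moving $x_3$ while $x_1,x_2$ stay frozen.

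The heart of the proof is then the following claim: \emph{there is a unit $w\in U(n)$ with $|wx_1|_n<\frac n2$ and $|wx_2|_n<\frac n2$.} Granting this, replace $S$ by $wS$ (which keeps $p\mid x_1,x_2$ and $p\nmid x_3$); since $\gcd(p,x_3)=1$, Lemma~\ref{L2.2} yields $y=1+t\alpha\in U(n)$ with $|yx_3|_n<\frac n2$, and this $y$ fixes $x_1,x_2$. Then at most one term of $yS$ — namely $|yx_4|_n$ — exceeds $\frac n2$, and Lemma~\ref{L2.1}(1) gives $\ind(S)=\ind(ywS)=1$.

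To prove the claim I would pass to $m:=n/p$ and write $x_i=p\overline{x}_i$ with $\overline{x}_i\in[1,m-1]$; since $m\mid n$ one has $|wx_i|_n=p\,|w\overline{x}_i|_m$, so the claim amounts to finding $w\in U(m)$ with $|w\overline{x}_1|_m,|w\overline{x}_2|_m<\frac m2$ (any such residue lifts to $U(n)$, the factor $p$ being easy to avoid). Now for each $i$ the set $\{w\in U(m):|w\overline{x}_i|_m<\frac m2\}$ has \emph{exactly} $\varphi(m)/2$ elements (the involution $w\leftrightarrow -w$ is fixed-point-free and pairs this set with its complement, using that $m$ is odd), so these two sets miss each other only if they partition $U(m)$ — that is, only if $|w\overline{x}_1|_m$ and $|w\overline{x}_2|_m$ lie on opposite sides of $\frac m2$ for \emph{every} $w\in U(m)$. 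After dividing out $\gcd(\overline{x}_1,m)$ and $\gcd(\overline{x}_2,m)$ and normalizing by a unit, this ``always opposite'' condition becomes the statement that multiplication by some class $c$ reverses the two halves of $\mz/m'\mz$ for a suitable divisor $m'$ of $m$; the elementary inequality chain $c>\frac34 m'>\frac56 m'>\frac78 m'>\cdots$ — legitimate precisely because $\gcd(n,6)=1$ makes $2,3,4,\dots$, and in particular $(m'-1)/2$, units mod $m'$ — forces $c\equiv-1\pmod{m'}$, i.e. $\overline{x}_1+\overline{x}_2\equiv0\pmod m$, i.e. $x_1+x_2\equiv0\pmod n$. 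This makes $(x_1g)(x_2g)$ a proper zero-sum subsequence of $S$, contradicting minimality, so the claim holds.

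The step I expect to be the main obstacle is exactly this reduction: handling the ``always opposite'' situation when $\overline{x}_1,\overline{x}_2$ are not units mod $m$ (so one is forced to work modulo several divisors $m'$ of $m$ and to track the common factor $\gcd(\overline{x}_1,\overline{x}_2,m)=d/p$), and making the ``reversal $\Rightarrow c\equiv-1$'' step airtight — in particular guaranteeing that enough integers below $m'/2$ are units mod $m'$ for the inequality chain to reach $m'-1$, which is where $\gcd(n,6)=1$ is really used. The case $p^2\mid n$ (where $\gcd(p,\alpha)>1$) should go through the same way after invoking Corollary~\ref{C2.3}/Remark~\ref{RM} in place of Lemma~\ref{L2.2} where appropriate.
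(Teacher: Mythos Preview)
Your overall architecture is exactly the paper's: first find $w\in U(n)$ with $|wx_1|_n,|wx_2|_n<\tfrac n2$, then pick a prime $p\mid d$, note $p\nmid x_3$, and use Lemma~\ref{L2.2} to choose $y=1+t\tfrac np\in U(n)$ with $|yx_3|_n<\tfrac n2$ while $y$ fixes $x_1,x_2$; Lemma~\ref{L2.1}(1) finishes. The divergence is entirely in how you prove the claim about $w$.

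Your counting/``reversal'' route has a real gap. After reducing to $A_1\cap A_2=\emptyset$ (so $A_2=-A_1$), you want to conclude $\bar x_2\equiv-\bar x_1\pmod m$. When $\bar x_1,\bar x_2\in U(m)$ this becomes: if $c\in U(m)$ satisfies $|cv|_m>\tfrac m2\iff|v|_m<\tfrac m2$ for every $v\in U(m)$, then $c\equiv-1$. Your chain $c>\tfrac34m>\tfrac56m>\cdots$ uses $v=1,2,3,4,\dots$, but once $m$ has a prime factor $\le\tfrac m2$ (which it will, since $n$ is not prime and $m=n/p$ need not be a prime power), some of these $v$ are \emph{not} units and the chain breaks; you would need a separate argument to bridge each such gap. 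And in the non-unit case---say $\gcd(\bar x_1,m)\ne\gcd(\bar x_2,m)$---the sets $\bar x_1U(m)$ and $\bar x_2U(m)$ even lie in different subgroups of $\mz/m\mz$, so there is no single ``ratio'' $c$ to analyze; the reduction you sketch (``work modulo several divisors $m'$'') is not developed.

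The paper avoids all of this with a two-line device you have in hand. If $f(x_1)=f(x_2)$, Lemma~\ref{L2.4} already gives the required $w$ (via powers of $2$). Otherwise normalize so that $x_1=f(x_1)$ and $f(x_1)>f(x_2)$; pick a prime $p$ with $p^{s+1}\mid x_1$ but $p^{s+1}\nmid x_2$ (any prime dividing $f(x_1)/d$ works), and apply Corollary~\ref{C2.3} with $\alpha=n/p^{s+1}$ to get $y=1+k\alpha\in U(n)$ with $|yx_2|_n<\tfrac n2$. Since $p^{s+1}\mid x_1$ this $y$ fixes $x_1=f(x_1)<\tfrac n2$, so $w=y$ does the job. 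This exploits the \emph{asymmetry} between $f(x_1)$ and $f(x_2)$ rather than trying to handle both at once, and it makes the claim immediate.
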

\begin{proof}
If $f(x_1)=f(x_2)=d$, the result follows from Lemma~\ref{L2.4}. So we may assume that $x_1=f(x_1)>d$. Note that $x_1=f(x_1)<\frac{n}{2}$.

Since $x_1>d$, there must exist a prime $p$ and a non-negative integer $s$ such that $p^s|x_2$, $p^{s+1}\not|x_2$ and  $p^{s+1}|x_1$ (in fact, we may choose $p$ to be any prime factor of $\frac{x_1}{d}$). Let $\alpha=\frac{n}{p^{s+1}}$. By Corollary~\ref{C2.3}, there exists $y=1+k\alpha\in U(n)$ such that $|yx_2|_n<\frac{n}{2}$. We note that $|yx_1|_n=x_1<\frac{n}{2}$.

By multiplying $S$ by such  $y$,  we may assume that $x_1<\frac{n}{2}$ and $x_2<\frac{n}{2}$.  Choose a prime $p$ such that $p|d$ and let $\alpha'=\frac{n}{p}$. Since $\gcd(d,x_3)=1$, $ \gcd(p, x_3)=1$, so it follows from Lemma~\ref{L2.2} that there exists $y_1=1+k_1\alpha'\in U(n)$ such that $|y_1x_3|_n<\frac{n}2$. Since $y_1$ fixes both $x_1$ and $x_2$,  it follows from  Lemma~\ref{L2.1} that $\ind(S)=1$.
\end{proof}

\begin{lemma}\label{L2.8}
If $f(x_1)>1$, $f(x_2)>1$ and $\gcd\big(f(x_1),f(x_2)\big)=1$, then $\ind(S)=1$.
\end{lemma}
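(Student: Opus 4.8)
Let me sketch a proof built around a single counting argument, with the small primes $5,7$ handled by Lemmas~\ref{L2.5}--\ref{L2.6}.

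\medskip
\noindent\emph{Set-up.} First, applying Lemma~\ref{L2.7} to each pair of indices, we may assume $\gcd\big(f(x_i),f(x_j)\big)=1$ for all $i\ne j$. Then for any prime $p\mid f(x_1)$ we have $p\nmid x_j$ whenever $j\ne1$, and the analogous statement holds for primes dividing $f(x_2)$; this is what makes the multiplication trick of Lemma~\ref{L2.2} available.

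\medskip
\noindent\emph{Key step.} Suppose some prime $p$ divides $f(x_1)$ or $f(x_2)$ and satisfies $p\ge 11$ or $p^2\mid n$; I claim $\ind(S)=1$. Say $p\mid f(x_1)$ (the other case is symmetric). As in the proofs of Lemmas~\ref{L2.4} and~\ref{L2.7}, multiply $S$ by a unit so that $x_1=f(x_1)$; since $f(x_1)$ is a proper divisor of the odd integer $n$, we get $x_1\le n/3<n/2$. Put $\alpha=n/p$, and let $t$ run over the $N$ values in $[0,p-1]$ with $y_t:=1+t\alpha\in U(n)$; by Remark~\ref{RM}, $N=p$ if $p^2\mid n$ and $N=p-1$ if $p\mid n$ but $p^2\nmid n$. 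Each $y_t$ fixes $x_1$ (since $p\mid x_1$), and, since $p\nmid x_2,x_3,x_4$, it permutes the residues $|y_tx_j|_n$ for $j\in\{2,3,4\}$ exactly as in the proof of Lemma~\ref{L2.2}. Now assume $\ind(S)\ne1$. For each $y_t$ the sum $|y_tx_1|_n+\cdots+|y_tx_4|_n$ is a multiple of $n$ lying in $[4,4n-4]$, hence equals $n$, $2n$ or $3n$; it is not $3n$ by Lemma~\ref{L2.1}(2), and if it equals $n$ then at most one of the four terms exceeds $n/2$ (two of them would already sum to more than $n$), so Lemma~\ref{L2.1}(1) gives $\ind(S)=1$. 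Thus the sum is $2n$ for every $y_t$. Let $k_t=\#\{j\in\{2,3,4\}:|y_tx_j|_n>n/2\}$; using $|y_tx_1|_n=f(x_1)<n/2$, if $k_t\in\{0,1\}$ then at most one of the four terms is above $n/2$, and if $k_t=3$ then at most one is below $n/2$, so in each of these cases Lemma~\ref{L2.1}(1) applies. Hence $k_t=2$ for all $t$. Summing the counts $\#\{t:|y_tx_j|_n>n/2\}$ over $j=2,3,4$ gives $2N$. On the other hand, for fixed $j$ the values $|y_tx_j|_n$ lie in a coset of $\langle\alpha\rangle$, i.e.\ an arithmetic progression with step $\alpha$ and span $(p-1)\alpha<n$, so at most $(p+1)/2$ of its $p$ members exceed $n/2$; therefore $2N\le 3(p+1)/2$. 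This forces $p\le 3$ when $N=p$ and $p\le 7$ when $N=p-1$, contradicting $p\ge 5$ (recall $\gcd(n,6)=1$) in the first case and $p\ge 11$ in the second. Since $p^2\mid n$ gives $N=p$, the claim also covers $p\in\{5,7\}$ with $p^2\mid n$.

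\medskip
\noindent\emph{Conclusion.} If the key step does not apply, then every prime factor of $f(x_1)$ and of $f(x_2)$ lies in $\{5,7\}$ and divides $n$ only to the first power; combined with $f(x_1),f(x_2)>1$ and $\gcd\big(f(x_1),f(x_2)\big)=1$, this forces $\{f(x_1),f(x_2)\}=\{5,7\}$ with $5^2\nmid n$ and $7^2\nmid n$. Applying Lemma~\ref{L2.6} to the index whose $f$-value equals $5$ (its $f$-value is coprime to the other three, and $5^2\nmid n$) yields $\ind(S)=1$; alternatively one may feed the index with $f$-value $7$ into Lemma~\ref{L2.5}. The delicate part is the key step --- checking that the only configuration surviving Lemma~\ref{L2.1} is ``total $2n$ with exactly two large residues among $x_2,x_3,x_4$'', and extracting the sharp bound $(p+1)/2$ from the coset structure; the remaining work is just bookkeeping over which prime and which of $x_1,x_2$ to run the argument with.
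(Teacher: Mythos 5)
Your proposal is correct, and while it runs on the same machinery as the paper (the multipliers $y=1+t\alpha$ with $\alpha=n/p$, Remark~\ref{RM}, the injectivity of $t\mapsto|y_tx_j|_n$ for $p\nmid x_j$, Lemma~\ref{L2.1}, the reduction to pairwise coprime $f$-values via Lemma~\ref{L2.7}, and the fallback to Lemmas~\ref{L2.5}--\ref{L2.6}), your central step is organized differently. The paper fixes the largest primes $p\mid f(x_1)$ and $q\mid f(x_2)$ with $p>q$, and hunts for a single good multiplier by a branching case analysis (either some $y$ makes $|yx_3|_n,|yx_4|_n$ both small, or at least $p-3$ multipliers split them, among which one must also make $|yx_2|_n$ small when $p\ge11$), with a separate but similar treatment of $p=7$, $7^2\mid n$, and Lemma~\ref{L2.5} for $7^2\nmid n$. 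You instead run a double count over all $N$ admissible $t$: assuming $\ind(S)\ne1$, Lemma~\ref{L2.1}(1) forces exactly two of $|y_tx_2|_n,|y_tx_3|_n,|y_tx_4|_n$ above $n/2$ for every $t$, while each fixed $j$ can contribute at most $(p+1)/2$ such $t$ (the coset $x_j+\langle\alpha\rangle$ has at most $(p+1)/2$ representatives above $n/2$), giving $2N\le 3(p+1)/2$, which is absurd for $p\ge11$ or for $p^2\mid n$ (where $N=p$). This is shorter and slightly more uniform than the paper's argument --- it covers $5^2\mid n$ as well, which the paper never needs thanks to its $p>q$ normalization --- and it leaves exactly the residual case $\{f(x_1),f(x_2)\}=\{5,7\}$ with $5^2\nmid n$, $7^2\nmid n$, which you close with Lemma~\ref{L2.6} (or \ref{L2.5}) just as the paper closes its $p=7$ case. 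Two minor remarks: the intermediate observation that each sum equals $2n$ is not actually used (the count $k_t=2$ follows from Lemma~\ref{L2.1}(1) alone), and, like the paper's own proof, your appeal to Lemmas~\ref{L2.5}--\ref{L2.6} tacitly uses the standing assumption made before Lemma~\ref{L2.5} that $n$ has at least three prime factors, which is the regime in which Lemma~\ref{L2.8} is applied.
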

\begin{proof}
First we assume that $x_1=f(x_1)<\frac{n}{2}$. Let $p$ and $q$ be the largest primes such that $p|f(x_1)$ and $q|f(x_2)$,  and  set $\alpha=\frac{n}{p}$. Without loss of generality, we may assume that $p > q$. In view of Lemma~\ref{L2.7}, we may also assume that $\gcd(f(x_1), f(x_i))=1$ for all $i\in [2, 4]$.

 Next, since $\gcd(x_1,q)=1$, we may assume that $x_3=w_1x_1+v_1q$ and $x_4=w_2x_1+v_2q$ where $\gcd(x_1, v_i)=1$ for all $i \in [1,2]$. As in Lemma~\ref{L2.2}, there exists at most one $t \in [0, p-1]$ such that $y=1+t\alpha\not\in U(n)$. If $(1+t\alpha)x_3=(1+s\alpha)x_3(\mod\;n)$, then $n|(t-s)\alpha v_1q$,  and thus $p|(t-s)$ (as $\gcd(p, v_1q)=1$), so $t=s$. A similar result holds for $x_4$.

If there doesn't exist any $y$ such that $|yx_3|_n<\frac{n}{2}$ and $|yx_4|_n<\frac{n}{2}$ and there exist at least three $y$ such that
both $|yx_3|_n>\frac{n}2$ and $|yx_4|_n>\frac{n}{2}$, then there exist at least $\frac{p-1}{2}+2$ many $y$ such that $|yx_3|_n>\frac{n}{2}$ or $|yx_4|_n>\frac{n}{2}$. This implies that $\frac{p}{2}>\frac{p-1}{2}+2-1=\frac{p+1}{2}$, giving a contradiction. Thus, either we can find $y=1+t\alpha\in U(n)$ such that $|yx_3|_n<\frac{n}{2}$ and $ |yx_4|_n<\frac{n}{2}$, or there exist at least $p-3$ many $y=1+t\alpha \in U(n)$ such that $|yx_3|_n$ and $|yx_4|_n$ stand in different sides of $\frac{n}{2}$ for each $y$. For the former case, as before we have $\ind(S)=1$ by Lemma~\ref{L2.1}.

Next we consider the latter case. If $p\geq 11$,  we can find $y=1+t\alpha\in U(n)$ such that $|yx_2|_n<\frac{n}{2}$.
For otherwise, for these $p-3$ many $y$ we have $|yx_2|_n>\frac{n}{2}$. As before, we infer that $\frac{p}{2}>p-4$ and thus $p<8$, giving a contradiction.

Now assume that $p=7$. Since $\gcd\big(f(x_1),f(x_2)\big)=1$, we conclude that $f(x_1)=7^\lambda$ and $f(x_2)=5^\mu$. If $7^2|n$,  then either we can find $y=1+t\alpha\in U(n)$ such that $|yx_3|_n<\frac{n}{2}$ and $ |yx_4|_n<\frac{n}{2}$, or, as before, there exist at least $6$ elements $y=1+t\alpha\in U(n)$ such that $|yx_3|_n$ and $|yx_4|_n$ stand in different sides of $\frac{n}{2}$. For the latter case, we can find $y \in U(n)$ such that at least two of  $|yx_2|_n<\frac{n}{2}$, $|yx_3|_n<\frac{n}{2}$ and $ |yx_4|_n<\frac{n}{2}$ hold. Thus in both cases we have $\ind(S)=1$ by Lemma~\ref{L2.1}.  Finally, if $7^2\not|n$, by Lemma~\ref{L2.5}, we have $\ind(S)=1$. This completes the proof.
\end{proof}
\begin{lemma} \label{L2.9}
If $f(x_1)=d>1$ and $f(x_2)=f(x_3)=f(x_4)=1$ (i.e. $x_2, x_3, x_4$ are co-prime to $n$), then $\ind(S)=1$.
\end{lemma}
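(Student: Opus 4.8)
The plan is to choose a prime divisor $p$ of $d$, pass to the arithmetic progression of units $y_t=1+t\alpha$ (with $\alpha=n/p$) exploited in Lemmas~\ref{L2.2}, \ref{L2.5} and~\ref{L2.8}, run a counting argument, and dispose of the finitely many values of $d$ that escape it by quoting Lemmas~\ref{L2.5} and~\ref{L2.6}. We keep the standing hypothesis that $n$ has at least three distinct prime factors — the case where $n$ is a product of at most two prime powers being already settled — so in particular $n\ge 5\cdot7\cdot11=385$. Assume for contradiction that $\ind(S)\ne1$. After multiplying $S$ by a suitable unit we may take $x_1=d$; then $1<d\le n/5<n/2$, and $d$ has a prime divisor $p$. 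Set $\alpha=n/p$ and $y_t=1+t\alpha$ for $t\in[0,p-1]$. Since $p\mid x_1$, each $y_t$ fixes $x_1$, so $|y_tx_1|_n=x_1<n/2$; by Remark~\ref{RM}, at least $p-1$ of the $y_t$ lie in $U(n)$, all $p$ of them if $p^2\mid n$. For each such $y_t$, Lemma~\ref{L2.1} rules out ``at most one $|y_tx_i|_n<n/2$'', ``at most one $|y_tx_i|_n>n/2$'' and ``$\sum_{i=1}^4|y_tx_i|_n=3n$''; together with $|y_tx_1|_n<n/2$ this forces exactly one of $|y_tx_2|_n,|y_tx_3|_n,|y_tx_4|_n$ to be below $n/2$.

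Now the counting. Since $\gcd(x_i,p)=1$ for $i\in[2,4]$, the argument in the proof of Lemma~\ref{L2.2} shows that $\{|y_tx_i|_n:t\in[0,p-1]\}$ is a set of $p$ residues in $[0,n)$ in arithmetic progression with common difference $\alpha$, whence exactly $(p-1)/2$ or $(p+1)/2$ of them lie below $n/2$. Writing $N_i=\#\{t:y_t\in U(n),\ |y_tx_i|_n<n/2\}$, this gives $N_i\ge(p-1)/2$ if $p^2\mid n$ and $N_i\ge(p-3)/2$ otherwise, while ``exactly one small'' gives $N_2+N_3+N_4=\#\{t:y_t\in U(n)\}\in\{p-1,p\}$. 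Hence $3(p-1)/2\le p$ (so $p\le3$) if $p^2\mid n$, and $3(p-3)/2\le p-1$ (so $p\le7$) otherwise — a contradiction whenever $p^2\mid n$ or $p\ge11$. Applying this to every prime divisor of $d$, we conclude that $d$ is squarefree with all prime divisors $\le7$, and $q^2\nmid n$ for each prime $q\mid d$; that is, $d\in\{5,7,35\}$, with $5^2\nmid n$ if $5\mid d$ and $7^2\nmid n$ if $7\mid d$.

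If $d=5$, then $f(x_1)=5$, $5^2\nmid n$, and $\gcd(f(x_1),f(x_j))=1$ for $j\in[2,4]$, so Lemma~\ref{L2.6} gives $\ind(S)=1$, contradicting our assumption; the case $d=7$ is excluded the same way via Lemma~\ref{L2.5}. For $d=35$ I would imitate the proof of Lemma~\ref{L2.5} with $p=7$: after another unit multiplication take $x_1=(n-35)/2$, which (using that $n$ is odd and $35\mid n$) still has $\gcd(n,x_1)=35$ and is fixed by every $y_t$. Now there are exactly $6$ relevant $t$, and since $6-N_i=\#\{t:y_t\in U(n),\ |y_tx_i|_n>n/2\}\le4$ (at most $(p+1)/2=4$ of the seven residues of $x_i$ exceed $n/2$), we get $N_i\ge2$, hence $N_2=N_3=N_4=2$. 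This forces $|x_i|_\alpha>\alpha/2$ for $i=2,3,4$ and forces the single excluded residue of each $x_i$ to lie below $n/2$ (hence among its three smallest residues), so an estimate exactly as in Lemma~\ref{L2.5} gives $M:=\sum_{y_t\in U(n)}\sum_{i=1}^4|y_tx_i|_n>3(n-35)+66\alpha=\tfrac{87n}{7}-105>12n$ since $n\ge385$. But in our situation each inner sum $\sum_{i=1}^4|y_tx_i|_n$ lies in $\{n,2n\}$, so $M\le12n$ — the final contradiction. Therefore $\ind(S)=1$.

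The step I expect to be the real obstacle is the $d=35$ endgame: one must check carefully that $N_2=N_3=N_4=2$ genuinely pins down both $|x_i|_\alpha>\alpha/2$ and the side on which each (unique) excluded residue sits, because the sharp estimate $M>12n$ — and with it the contradiction — depends on exactly these two facts together with the explicit choice $x_1=(n-35)/2$. The rest (the case split on the prime factorization of $d$, and the appeals to Lemmas~\ref{L2.1}, \ref{L2.2}, \ref{L2.5}, \ref{L2.6}) should be routine.
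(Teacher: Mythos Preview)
Your proof is correct and follows essentially the same approach as the paper's: both exploit the family $y_t=1+t\alpha$ (with $\alpha=n/p$) and a residue-counting argument to reduce to the cases $d\in\{5,7,35\}$, then invoke Lemmas~\ref{L2.5}--\ref{L2.6} and carry out the $M>12n$ estimate for $d=35$ with $x_1=(n-35)/2$. Your symmetric count $N_2+N_3+N_4=\#\{t:y_t\in U(n)\}$, applied to \emph{every} prime divisor of $d$, is a slightly cleaner packaging than the paper's appeal to ``a proof similar to that of Lemma~\ref{L2.8}'' using only the largest prime, but the substance is the same; note also that in your $d=35$ endgame each inner sum is in fact exactly $2n$ (two residues below $n/2$ and two above force $n<\text{sum}<3n$), so $M=12n$ on the nose, which makes the contradiction even sharper.
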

\begin{proof}
Let $p$ be the largest prime factor of $f(x_1)$ and $\alpha =\frac{n}{p}$. Since $x_2,x_3,x_4$ are co-prime to $n$ (hence they are co-prime to $p$), we may assume that
$x_i=w_ip+v_i$ for $i\in[2,4]$,  where $v_i\in[1,p-1]$. Again, we can show that  $(1+t\alpha)x_i=(1+s\alpha)x_i(\mod\;n)$ for any $i\in [2, 4]$ if and only if $t=s$.

If $p\geq 11$ or $p^2|n$, a proof similar to that of Lemma~\ref{L2.8} shows that $\ind(S)=1$. If $p\leq 7$, $f(x_1)=p\in\{5,7\}$ and $p^2\not|n$, by Lemmas~\ref{L2.5} and \ref{L2.6},  we get $\ind(S)=1$ as desired. Finally, we consider the last case when $p=7$, $p^2 \not|n$ and $f(x_1)=5\cdot7=35$. Since $n$ has at least three different prime factors and $\alpha=\frac{n}{7}\geq 55$. As in the proof of Lemma~\ref{L2.5}, we may assume that $x_1=\frac{n-35}{2}$ and we can reduce to the only case that there are exactly four $y=1+t\alpha \in U(n)$ such that $|yx_i|_n > \frac{n}{2}$ for each $i \in [2,4]$. As before, we can estimate the sum $M$ as follows:

\beqs M&=&\sum_{y=1+t\alpha\in U(n)\atop t\in[0,6]}\sum_{i=1}^4|yx_i|_n=\sum_{i=1}^4\sum_{y=1+t\alpha\in U(n)\atop t\in[0,6]} |yx_i|_n\\
& > & 3n-105+66\alpha=12n+3(\alpha-35)> 12n.\eeqs
Thus there exists at least one $y=1+t\alpha \in U(n)$ such that $|yx_1|_n+|yx_2|_n+|yx_3|_n+|yx_4|_n=3n$. By Lemma~\ref{L2.1}, we get  $\ind(S)=1$ as desired.
\end{proof}

\section{Proof of Main Result}

As mentioned early, in [18] the authors settled the remaining case when $|G|$ is a product of two prime powers. However, the proof is quite long.  By applying an innovative method developed in this paper, we are able to give a new and very short proof for the above mentioned case. This  together with [9]  provides a complete solution to the index conjecture for the product of two prime-power case.

\begin{theo}\label{T3.1}
Let $G=\langle g\rangle$ be a finite cyclic group of order $|G|=n$ such that $\gcd(n,6)=1$ and $n=p^{\beta}q^{\gamma} $ is a product of two different prime powers. If $S=(x_1g)(x_2g)(x_3g)(x_4g)$ is any  minimal zero-sum  sequence over $G$, then  $\ind(S)=1$.
\end{theo}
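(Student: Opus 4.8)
The plan is to derive Theorem~\ref{T3.1} from the tools assembled in Section~2 together with the case of Conjecture~1.2 already established in [9], by a short dichotomy on the divisors $f(x_i)=\gcd(n,x_i)$. First I would normalise the sequence. If $d=\gcd(n,x_1,x_2,x_3,x_4)>1$, write $n=dm$ (note $m<n$) and $x_i=dy_i$; then $S'=(y_1\cdot dg)\cdots(y_4\cdot dg)$ is a minimal zero-sum sequence over the cyclic group $\langle dg\rangle$ of order $m$, and since $d \mid |yx_i|_n$ for every $y\in U(n)$ while the reduction map $U(n)\to U(m)$ is surjective, one gets $\ind_{G}(S)=\ind_{\langle dg\rangle}(S')$. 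As $m$ is a divisor of $n$, it is again a product of at most two prime powers with $\gcd(m,6)=1$, so by induction on $n$ we may assume from now on that $\gcd(n,x_1,x_2,x_3,x_4)=1$; and if at this stage $n$ turns out to be a prime power we are done by [10], so we may assume $n=p^{\beta}q^{\gamma}$ with $\beta,\gamma\geq1$.

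Next I would split into two cases according to whether some term is coprime to $n$. If $f(x_i)=1$ for some $i$, that is, at least one term of $S$ is coprime to $|G|$, then $\ind(S)=1$ is precisely the content of [9]. Suppose instead $f(x_i)>1$ for every $i\in[1,4]$. Since every divisor of $n=p^{\beta}q^{\gamma}$ larger than $1$ is divisible by $p$ or by $q$, for each $i$ we may choose a prime $\pi_i\in\{p,q\}$ with $\pi_i \mid x_i$; there being four indices and only two primes, two of the $\pi_i$ coincide, say $\pi_1=\pi_2=\pi$ after a relabelling. Then $\pi$ divides both $f(x_1)$ and $f(x_2)$, hence $\gcd\big(f(x_1),f(x_2)\big)\geq\pi>1$, and Lemma~\ref{L2.7} yields $\ind(S)=1$. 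This exhausts all possibilities.

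I do not anticipate a real obstacle in the argument itself: essentially all of the difficulty has been front-loaded into Section~2 — above all into Lemma~\ref{L2.7} and the auxiliary results on the units $1+t\alpha$ — and the case $f(x_i)>1$ for all $i$, which is exactly the situation left open after [9] and settled at length in [18], here collapses to the pigeonhole remark above followed by a single appeal to Lemma~\ref{L2.7}. The points that do require some care are the index-preserving reduction to $\gcd(n,x_1,x_2,x_3,x_4)=1$ (including the degenerate prime-power base case handled by [10]), and verifying that the hypotheses of Lemma~\ref{L2.7} hold when it is invoked — in particular that the standing assumption $\gcd(n,x_1,x_2,x_3,x_4)=1$ secures the coprimality of $f(x_1)$ (and of $f(x_2)$) with the remaining $f(x_j)$ that is used inside that lemma.
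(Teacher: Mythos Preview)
Your proposal is correct and follows essentially the same route as the paper. Both arguments first reduce to $\gcd(n,x_1,x_2,x_3,x_4)=1$, then invoke [9] when some $f(x_i)=1$, and in the remaining case observe that two of the $x_i$ must share a prime factor from $\{p,q\}$; the only cosmetic difference is that you cite Lemma~\ref{L2.7} directly, whereas the paper—after pinning down $f(x_1),f(x_2)$ as pure $p$-powers and $f(x_3),f(x_4)$ as pure $q$-powers—inlines the same two-case argument (Lemma~\ref{L2.4} when $f(x_1)=f(x_2)$, Corollary~\ref{C2.3} otherwise) that constitutes the proof of Lemma~\ref{L2.7}. Your worry about the hypotheses of Lemma~\ref{L2.7} is unfounded: the coprimality $\gcd(d,x_3)=1$ used there is exactly the standing fact $\gcd\big(f(x_i),f(x_j),f(x_k)\big)=1$ recorded at the start of Section~2, which follows from the zero-sum condition together with $\gcd(n,x_1,x_2,x_3,x_4)=1$.
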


\begin{proof}
In view of \cite[Theorem 1.3]{LP2}, we may assume  $f(x_i) >1$ for each $i\in [1,4]$. We may also assume that $\gcd\big(f(x_1), f(x_2),f(x_3), f(x_4)\big)=1$,    $p| \gcd\big(f(x_1), f(x_2)\big)$ and   \\ $q| \gcd\big(f(x_3), f(x_4)\big)$. Thus we get that $f(x_1)=p^{s_1}, f(x_2)=p^{s_2}, f(x_3)=q^{s_3}$, and $f(x_4)=q^{s_4}$ with $s_i \geq 1, i\in [1,4]$. Without loss of generality, we may assume that $x_1=f(x_1) < \frac{n}{2}$ and $ f(x_1)\geq f(x_2)$ (i.e. $s_1\geq s_2$). We divide the proof into two cases.

{\bf Case 1}. $f(x_1)=f(x_2)=\gcd\big(f(x_1),f(x_2)\big) >1$. As in Lemma~\ref{L2.4}, we can find $u\in U(n)$ such that $|ux_1|_n <\frac{n}{2}$ and $|ux_2|_n <\frac{n}{2}$.  Since $\gcd(|ux_3|_n, p)=1$, by Lemma~\ref{L2.2}, there exists $y=1+t\frac{n}{p}\in U(n)$ such that $|yux_3|_n< \frac{n}{2}$. Note also that $|yux_i|_n=|ux_i|_n < \frac{n}{2}$ for all $i\in [1, 2]$. So it follows from Lemma~\ref{L2.1} that $\ind(S)=1$.

{\bf Case 2}. $f(x_1)>f(x_2)=p^{s_2}$. Note that $p^{s_2}|f(x_2), p^{s_2+1} \not| f(x_2)$ and $p^{s_2+1} | f(x_1)$. By Corollary~\ref{C2.3}, there exist $u=1+t\alpha \in U(n)$ with $\alpha=\frac{n}{p^{s_2+1}}$ such that $|ux_2|_n < \frac{n}{2}$. Note also that $|ux_1|_n=x_1 < \frac{n}{2}$. As in  Case 1, we can find $y=1+t\frac{n}{p}\in U(n)$ such that  $|yux_i|_n< \frac{n}{2}$ for all $i\in [1,3]$. Therefore, $\ind(S)=1$ as desired.
\end{proof}

%In this section we first provide a proof for our main theorem.
%We are now in ready to prove our main result.

{\bf Proof of Theorem~\ref{mainthm}.}

If $n$ has at most two distinct prime factors, the result follows immediately from \cite{LP2} and Theorem 3.1. So we need only consider the case when $n$ has at least three distinct prime factors. Assume that $x_1=f(x_1)=d>1$ and  $n$ has at least three distinct prime factors. We divide the proof into the following two  cases:

{\bf Case 1}. At least one $\gcd\big(f(x_1),f(x_i)\big)>1$ for $i\in [2, 4]$. Without loss of generality, we may assume that $\gcd\big(f(x_1),f(x_2)\big)>1$. It follows from Lemma~\ref{L2.7} that $\ind(S)=1$.

{\bf Case 2}. All $\gcd\big(f(x_1),f(x_i)\big)=1$ for $i\in [2, 4]$. We divide the proof into two subcases.

{\bf Subcase 2.1}. At least one $f(x_i) >1$ for  $i\in [2, 4]$. Without loss of generality, we may assume that $f(x_2)>1$. The result follows from Lemma~\ref{L2.8}.

{\bf Subcase 2.2}. $f(x_2)=f(x_3)=f(x_4)=1$. The result follows from Lemma~\ref{L2.9}.   \hfill $\Box$

\vskip .5 cm
\begin{center}
{ACKNOWLEDGEMENTS}
\end{center}
We would like to thank the referee for valuable suggestions which helped in improving the readability of the paper. A part of research was carried out during a visit by the third author to
Jiangsu University. He would like to gratefully acknowledge the kind hospitality from the host
institution. This research was supported in part by the  NNSF of China (Grant No. 11001110, 11271131) and a Discovery Grant from the Natural Science and Engineering Research Council of Canada.

\vskip30pt
\def\refname{\centerline{\bf REFERENCES}}


\begin{thebibliography}{15}
%\bibitem[1]{CFS} S.T. Chapman, M. Freeze, and W.W Smith, {\sl Minimal zero sequences and the strong Davenport constant}, Discrete Math. 203(1999), 271-277.
\bibitem[1]{CS}  S.T. Chapman, and W.W Smith, {\sl A characterization of minimal zero-sequences of index one in finite cyclic groups}, Integers 5 (1) (2005), Paper A27, 5p.
\bibitem[2]{Gao} W. Gao , {\sl Zero sums in finite cyclic groups}, Integers 0 (2000), Paper A14, 9p.
\bibitem[3]{GaoGer} W. Gao and A. Geroldinger, {\sl On products of k atoms}, Monatsh. Math. 156 (2009), 141-157.
\bibitem[4]{GLPPW} W. Gao, Y. Li, J. Peng, P. Plyley and G. Wang {\sl On the index of sequences over cyclic groups} (English), Acta Arith. 148 (2) (2011), 119-134.
\bibitem[5]{Ger1} A. Geroldinger, {\sl On non-unique factorizations into irreducible elements. II}, Number Theory, Vol II Budapest 1987, Colloquia Mathematica Societatis Janos Bolyai, vol. 51, North Holland, 1990, 723-757.
\bibitem[6]{Ger2} A. Geroldinger, {\sl Additive group theory and non-unique factorizations}, Combinatorial Number Theory and Additive Group Theory (A. Geroldinger and I. Ruzsa, eds.), Advanced Courses in Mathematics CRM Barcelona, Birkh\"{a}user, 2009, pp. 1-86.
\bibitem[7]{GerH} A. Geroldinger and F. Halter-Koch, {\sl Non-Unique Factorizations}. Algebraic, Combinatorial and Analytic Theory, Pure and Applied Mathematics, Vol. 278, Chapman \& Hall/CRC, 2006.
%\bibitem[9]{KL} D. Kleitman and P. Lemke, {\sl An addition theorem on the integers modulo n}, J. Number Theory 31(1989), 335-345.
\bibitem[8]{LP1} J. Peng and Y. Li, {\sl Minimal zero-sum sequences of length five over finite cyclic groups}, Ars Combinatoria 112 (2013), 373-384.
\bibitem[9]{LP2} Y. Li and J. Peng, {\sl Minimal zero-sum sequences of length four over finite cyclic groups II}, International Journal of Number Theory 9 (2013), 845-866.
\bibitem[10]{LPYZ} Y. Li, C. Plyley, P. Yuan and X. Zeng, {\sl Minimal zero sum sequences of length four over finite cyclic groups}, Journal of Number Theory 130 (2010), 2033-2048.
\bibitem[11]{P} V. Ponomarenko, {\sl Minimal zero sequences of finite cyclic groups}, Integers 4 (2004), Paper A24, 6p.
\bibitem[12]{SC} S. Savchev and F. Chen, {\sl Long zero-free sequences in finite cyclic groups}, Discrete Math. 307 (2007), 2671-2679.
\bibitem[13]{XY} X. Xia and P. Yuan, {\sl Indexes of insplitable minimal zero-sum sequences of length $l(C_n)-1$}, Discrete Math. 310 (2010), 1127-1133.
\bibitem[14]{Y} P. Yuan, {\sl On the index of minimal zero-sum sequences over finite cyclic groups}, J. Combin. Theory Ser. A114 (2007), 1545-1551.
\bibitem[15]{YZ} P. Yuan and X. Zeng, {\sl Indexes of long zero-sum free sequences over cyclic groups}, Eur. J. Comb. 32 (2011),
1213-1221.
\bibitem[16]{G} D. J. Grynkiewicz, {\sl Structural Additive Theory, Developments in Mathematics}, Springer, 2013.
\bibitem[17]{X} L. Xia, {\sl On the index-conjecture on length four minimal zero-sum sequences}, International Journal of Number Theory, Vol. 9, No. 6 (2013) 1505-1528. Doi: 10.1142/S1793042113500401.
\bibitem[18]{XS} L. Xia and C. Shen, {\sl Minimal zero-sum sequences of length four over cyclic group with order $n=p^\alpha q^\beta$}, Journal of Number Theory 133 (2013) pp. 4047-4068.
\bibitem[19]{SX} Caixia Shen, Li-meng Xia,  {\sl On the index-conjecture on length four minimal zero-sum sequences II}, International Journal of Number Theory, to appear, DOI: 10.1142/S179304211350111X.

\end{thebibliography}
\end{document}